\newcommand{\po}{\ar@{}[dr]|{\text{\pigpenfont R}}}
\newcommand{\pb}{\ar@{}[dr]|{\text{\pigpenfont J}}}
\newtheorem{theorem}{Theorem}
\theoremstyle{definition}
\newtheorem{definition}{Definition}[section]
\newtheorem{notation}{Notation}[section]
\newtheorem{lemma}{Lemma}[section]
\newtheorem{corollary}{Corollary}[section]
\newtheorem{proposition}{Proposition}[section]
\newtheorem{remark}{Remark}[section]
\newtheorem{example}{Example}[section]
\numberwithin{equation}{section}
\def\iunknot{
{\xy
(0,-5);(0,-5)**\crv{(5,-5)&(5,5)&(-5,5)&(-5,-5)}
?>(.3)*\dir{>}
,(-4,-5)*{\scriptstyle{1}}
\endxy}
}
\def\ev{
{\xy
(-5,-5);(5,-5)**\crv{(-5,5)&(0,5)&(5,5)}
?>(1)*\dir{>}
,(-4,-5)*{\scriptstyle{1}},(6,-5)*{\scriptstyle{1}}
\endxy}
}
\def\evop{
{\xy
(-5,-5);(5,-5)**\crv{(-5,5)&(0,5)&(5,5)}
?>(0)*\dir{<}
,(-4,-5)*{\scriptstyle{1}},(6,-5)*{\scriptstyle{1}}
\endxy}
}
\def\coev{
{\xy
(-5,5);(5,5)**\crv{(-5,-5)&(0,-5)&(5,-5)}
?>(0)*\dir{<}
,(-4,5)*{\scriptstyle{1}},(6,5)*{\scriptstyle{1}}
\endxy}
}
\def\coevop{
{\xy
(-5,5);(5,5)**\crv{(-5,-5)&(0,-5)&(5,-5)}
?>(1)*\dir{>}
,(-4,5)*{\scriptstyle{1}},(6,5)*{\scriptstyle{1}}
\endxy}
}
\def\esse{
{\xy
(-5,-10);(-5,10)**\crv{(-1,-6)&(0,-5)&(0,-2)&(0,2)&(0,5)&(-1,6)}
?>(.55)*\dir{>}?>(.1)*\dir{>}?>(.95)*\dir{>}
,(5,-10);(5,10)**\crv{(1,-6)&(0,-5)&(0,-2)&(0,2)&(0,5)&(1,6)}
?>(.1)*\dir{>}?>(.95)*\dir{>}
,(-7,-10)*{\scriptstyle{1}}
,(7,-10)*{\scriptstyle{1}}
,(-7,10)*{\scriptstyle{1}}
,(7,10)*{\scriptstyle{1}}
,(2,0)*{\scriptstyle{2}}
\endxy}\,
}
\def\ti{
{\xy
(-5,-10);(-5,10)**\crv{(-1,-6)&(0,-5)&(0,-2)&(0,2)&(0,5)&(-1,6)}
?>(.55)*\dir{>}?>(.1)*\dir{>}?>(.95)*\dir{>}
,(5,-10);(5,10)**\crv{(1,-6)&(0,-5)&(0,-2)&(0,2)&(0,5)&(1,6)}
?>(.1)*\dir{>}?>(.95)*\dir{>}
,(0,-10);(0,10)**\dir{-}?>(.15)*\dir{>}?>(.95)*\dir{>}
,(-7,-10)*{\scriptstyle{1}}
,(7,-10)*{\scriptstyle{1}}
,(-7,10)*{\scriptstyle{1}}
,(7,10)*{\scriptstyle{1}}
,(-1,-11)*{\scriptstyle{1}}
,(-1,11)*{\scriptstyle{1}}
,(2,0)*{\scriptstyle{3}}
\endxy}
}
\def\essecircesse{
{\xy
(0,-10);(0,10)**\crv{(0,-7)&(0,-6)&(0,-5)&(-4,0)&(0,5)&(0,6)&(0,7)}
?>(.1)*\dir{>}?>(.99)*\dir{>}?>(.65)*\dir{>}
,(0,-10);(0,10)**\crv{(0,-7)&(0,-6)&(0,-5)&(4,0)&(0,5)&(0,6)&(0,7)}
?>(.65)*\dir{>}
,(0,10);(-5,14)**\crv{(0,10)&(0,11)&(-1,12)}?>(.95)*\dir{>}
,(0,10);(5,14)**\crv{(0,10)&(0,11)&(1,12)}?>(.95)*\dir{>}
,(0,-10);(-5,-14)**\crv{(0,-10)&(0,-11)&(-1,-12)}?>(.85)*\dir{<}
,(0,-10);(5,-14)**\crv{(0,-10)&(0,-11)&(1,-12)}?>(.85)*\dir{<}
,(-7,-15)*{\scriptstyle{1}}
,(7,-15)*{\scriptstyle{1}}
,(-7,15)*{\scriptstyle{1}}
,(7,15)*{\scriptstyle{1}}
,(-4,0)*{\scriptstyle{1}}
,(4,0)*{\scriptstyle{1}}
,(1.5,-9)*{\scriptstyle{2}}
,(1.5,9)*{\scriptstyle{2}}
\endxy}
}
\def\idone{
{\xy
(0,-10);(0,10)**\dir{-}?>(.5)*\dir{>}
,(-2,-10)*{\scriptstyle{1}}
,(-2,10)*{\scriptstyle{1}}
\endxy}
}
\def\idoner{
{\xy
(0,-10);(0,10)**\dir{-}?>(.5)*\dir{>}
,(2,-10)*{\scriptstyle{1}}
,(2,10)*{\scriptstyle{1}}
\endxy}
}
\def\idtwo{
{\xy
(-5,-10);(-5,10)**\dir{-}?>(.5)*\dir{>}
,(5,-10);(5,10)**\dir{-}?>(.5)*\dir{>}
,(-7,-10)*{\scriptstyle{1}}
,(7,-10)*{\scriptstyle{1}}
,(-7,10)*{\scriptstyle{1}}
,(7,10)*{\scriptstyle{1}}
\endxy}
}
\def\splus{\xy 
(4,-2.5);(-4,2.5)**\crv{(4,-2.25)&(4,-2.1)&(0,0)&(-4,2.1)&(-4,2.25)}
?>(.7)*\dir{>}?>(.5)*{\color{white}\bullet},
(-4,-2.5);(4,2.5)**\crv{(-4,-2.25)&(-4,-2.1)&(0,0)&(4,2.1) & (4,2.25)}
?>(.7)*\dir{>}
\endxy}
\newcommand{\splusk}[1]{\left.{\raisebox{-3.5pt}{\xy
(4,-7.5);(-4,-2.5)**\crv{(4,-7.25)&(4,-7.1)&(0,-5)&(-4,-2.9)& (-4,-2.75)}
?>(.5)*{\color{white}\bullet},
,(-4,-7.5);(4,-2.5)**\crv{(-4,-7.25)&(-4,-7.1)&(0,-5)&(4,-2.9)& (4,-2.75)}
,(4,-2.5);(-4,2.5)**\crv{(4,-2.25)&(4,-2.1)&(0,0)&(-4,2.1)& (-4,2.25)}
?>(.7)*\dir{>}?>(.5)*{\color{white}\bullet},
(-4,-2.5);(4,2.5)**\crv{(-4,-2.25)&(-4,-2.1)&(0,0)&(4,2.1) &(4,2.25)}
?>(.7)*\dir{>}
,(4,7);(-4,12)**\crv{(4,7.25)&(4,7.35)&(0,9.5)&(-4,11.6)& (-4,11.75)}
?>(.7)*\dir{>}?>(.5)*{\color{white}\bullet},
,(-4,7);(4,12)**\crv{(-4,7.25)&(-4,7.35)&(0,9.5)&(4,11.6)& (4,11.75)}
?>(.7)*\dir{>}
,(-4,5.5)*{\vdots}
,(4,5.5)*{\vdots}
\endxy}}\,\,\right\}\text{\small{${#1}$ \rm crossings}}}
\newcommand{\trsplusk}[1]{\left.{\raisebox{-3.5pt}{\xy
(4,-7.5);(-4,-2.5)**\crv{(0,-5)&(-4,-2.9)& (-4,-2.75)}
?>(.2)*{\color{white}\bullet},
,(-4,-7.5);(4,-2.5)**\crv{(0,-5)&(4,-2.9)& (4,-2.75)}
,(4,-2.5);(-4,2.5)**\crv{(4,-2.25)&(4,-2.1)&(0,0)&(-4,2.1)& (-4,2.25)}
?>(.7)*\dir{>}?>(.5)*{\color{white}\bullet},
(-4,-2.5);(4,2.5)**\crv{(-4,-2.25)&(-4,-2.1)&(0,0)&(4,2.1) &(4,2.25)}
?>(.7)*\dir{>}
,(4,7);(-4,12)**\crv{(4,7.25)&(4,7.35)&(0,9.5)}
?>(.7)*\dir{>}?>(.79)*{\color{white}\bullet},
,(-4,7);(4,12)**\crv{(-4,7.25)&(-4,7.35)&(0,9.5)}
?>(.7)*\dir{>}
,(-4,5.5)*{\vdots}
,(4,5.5)*{\vdots}
,(-4,12);(-4,-7.5)**\crv{(-5,12.4)&(-6,12.8)&(-9,1.25)&(-6,-8.3)&(-5,-7.9)}
,(4,12);(4,-7.5)**\crv{(5,12.4)&(6,12.8)&(9,1.25)&(6,-8.3)&(5,-7.9)}
\endxy}}\,\,\right\}\text{\small{${#1}$ \rm crossings}}}
\newcommand{\hopf}{{\raisebox{7pt}{\xy
(4,-7.5);(-4,-2.5)**\crv{(0,-5)&(-4,-2.9)& (-4,-2.75)}
?>(.2)*{\color{white}\bullet},
,(-4,-7.5);(4,-2.5)**\crv{(0,-5)&(4,-2.9)& (4,-2.75)}
,(4,-2.5);(-4,2.5)**\crv{(4,-2.25)&(4,-2.1)&(0,0)}
?>(.7)*\dir{>}?>(.79)*{\color{white}\bullet},
(-4,-2.5);(4,2.5)**\crv{(-4,-2.25)&(-4,-2.1)&(0,0)}
?>(.7)*\dir{>}
,(-4,2.5);(-4,-7.5)**\crv{(-5,2.9)&(-6,3.3)&(-9,-2.5)&(-6,-8.3)&(-5,-7.9)}
,(4,2.5);(4,-7.5)**\crv{(5,2.9)&(6,3.3)&(9,-2.5)&(6,-8.3)&(5,-7.9)}
\endxy}}}
\newcommand{\trefoil}{{\raisebox{1pt}{\xy
(4,-7.5);(-4,-2.5)**\crv{(0,-5)&(-4,-2.9)& (-4,-2.75)}
?>(.2)*{\color{white}\bullet},
,(-4,-7.5);(4,-2.5)**\crv{(0,-5)&(4,-2.9)& (4,-2.75)}
,(4,-2.5);(-4,2.5)**\crv{(4,-2.25)&(4,-2.1)&(0,0)&(-4,2.1)& (-4,2.25)}
?>(.7)*\dir{>}?>(.5)*{\color{white}\bullet},
(-4,-2.5);(4,2.5)**\crv{(-4,-2.25)&(-4,-2.1)&(0,0)&(4,2.1) &(4,2.25)}
?>(.7)*\dir{>}
,(4,2.5);(-4,7.5)**\crv{(4,2.75)&(4,2.85)&(0,5)}
?>(.79)*{\color{white}\bullet},
,(-4,2.5);(4,7.5)**\crv{(-4,2.75)&(-4,2.85)&(0,5)}
,(-4,7.5);(-4,-7.5)**\crv{(-5,7.9)&(-6,8.3)&(-9,0)&(-6,-8.3)&(-5,-7.9)}
,(4,7.5);(4,-7.5)**\crv{(5,7.9)&(6,8.3)&(9,0)&(6,-8.3)&(5,-7.9)}
\endxy}}}
\def\tresse{
{\xy
(-5,-10);(-5,10)**\crv{(-1,-6)&(0,-5)&(0,-2)&(0,2)&(0,5)&(-1,6)}
?>(.55)*\dir{>}?>(.1)*\dir{>}?>(.95)*\dir{>}
,(5,-10);(5,10)**\crv{(1,-6)&(0,-5)&(0,-2)&(0,2)&(0,5)&(1,6)}
?>(.1)*\dir{>}?>(.95)*\dir{>}
,(-5,10);(-5,-10)**\crv{(-6,10.5)&(-7,10.8)&(-10,0)&(-7,-10.8)&(-6,-10.5)}
,(5,10);(5,-10)**\crv{(6,10.5)&(7,10.8)&(10,0)&(7,-10.8)&(6,-10.5)}
,(-8,0)*{\scriptstyle{1}}
,(8,0)*{\scriptstyle{1}}
,(2,0)*{\scriptstyle{2}}
\endxy}\,
}
\def\twocircles{
{\xy
(0,-10);(0,-10)**\crv{(5,-10)&(5,10)&(-5,10)&(-5,-10)}
?>(.3)*\dir{>}
,(11,-10);(11,-10)**\crv{(16,-10)&(16,10)&(6,10)&(6,-10)}
?>(0.7)*\dir{<}
,(7.2,-9)*{\scriptstyle{1}}
,(4,-9)*{\scriptstyle{1}}
\endxy}}
\def\zorro{
{\xy
(-3,0);(-3,10)**\crv{(-1,2)&(0,3)&(0,4)&(0,6)&(0,7)&(-1,8)}
?>(.55)*\dir{>}
?>(.95)*\dir{>}
,(3,0);(3,10)**\crv{(1,2)&(0,3)&(0,4)&(0,6)&(0,7)&(1,8)}
?>(.95)*\dir{>}
,(3,-10);(3,0)**\crv{(5,-8)&(6,-7)&(6,-6)&(6,-4)&(6,-3)&(5,-2)}
?>(.55)*\dir{>}?>(.1)*\dir{>}?>(.99)*\dir{>}
,(9,-10);(9,0)**\crv{(7,-8)&(6,-7)&(6,-6)&(6,-4)&(6,-3)&(7,-2)}
?>(.1)*\dir{>}
,(9,0);(10,10)**\crv{(10,1)&(10,3)&(10,4)&(10,6)&(10,7)&(10,8)}
?>(.5)*\dir{>}
,(-3,0);(-4,-10)**\crv{(-4,-1)&(-4,-3)&(-4,-4)&(-4,-6)&(-4,-7)&(-4,-8)}
?>(.5)*\dir{<}
,(-5,-10)*{\scriptstyle{1}}
,(10,-10)*{\scriptstyle{1}}
,(2,-10)*{\scriptstyle{1}}
,(4,10)*{\scriptstyle{1}}
,(-4,10)*{\scriptstyle{1}}
,(11,10)*{\scriptstyle{1}}
,(1,0)*{\scriptstyle{1}}
,(4.5,-5)*{\scriptstyle{2}}
,(1.5,5)*{\scriptstyle{2}}
\endxy}\,
}
\def\coso{
{\xy
(0,-10);(0,10)**\crv{(0,-7)&(0,-6)&(0,-5)&(-4,0)&(0,5)&(0,6)&(0,7)}
?>(.1)*\dir{>}?>(.99)*\dir{>}?>(.65)*\dir{>}
,(0,-10);(0,10)**\crv{(0,-7)&(0,-6)&(0,-5)&(4,0)&(0,5)&(0,6)&(0,7)}
,(0,10);(-5,14)**\crv{(0,10)&(0,11)&(-1,12)}?>(.95)*\dir{>}
,(0,10);(5,14)**\crv{(0,10)&(0,11)&(1,12)}?>(.95)*\dir{>}
,(0,-10);(-5,-14)**\crv{(0,-10)&(0,-11)&(-1,-12)}?>(.85)*\dir{<}
,(0,-10);(5,-14)**\crv{(0,-10)&(0,-11)&(1,-12)}?>(.85)*\dir{<}
,(10,-14);(2,-2.2)**\crv{(10,-7)&(10,-6)&(4,-4)&(1,-4)}
?>(.1)*\dir{>}?>(.65)*\dir{>}
,(10,14);(2,2.2)**\crv{(10,7)&(10,6)&(4,4)&(1,4)}
?>(.1)*\dir{<}?>(.65)*\dir{<}
,(-7,-15)*{\scriptstyle{1}}
,(7,-15)*{\scriptstyle{1}}
,(-7,15)*{\scriptstyle{1}}
,(7,15)*{\scriptstyle{1}}
,(-4,0)*{\scriptstyle{1}}
,(4,0)*{\scriptstyle{2}}
,(1.5,-9)*{\scriptstyle{2}}
,(1.5,9)*{\scriptstyle{2}}
,(11,-15)*{\scriptstyle{1}}
,(11,15)*{\scriptstyle{1}}
,(1.5,-5)*{\scriptstyle{1}}
,(1.5,5)*{\scriptstyle{1}}
\endxy}
}
\def\idtwob{\mathrm{id}_2}
\def\esseb{S}
\def\essecircesseb{S\circ S}
\title{Computing the Khovanov homology of 2 strand braid links via generators and relations}
\author{Domenico Fiorenza}
\address{Sapienza Universit\`a di Roma; Dipartimento di Matematica ``Guido Castelnuovo'', P.le Aldo Moro, 5 - 00185 - Rome, Italy; 
}
\email{fiorenza@mat.uniroma1.it}
\author{Omid Hurson}
\address{Formerly at PhD school in Mathematics, Faculty of Mathematics, Universit\"at Wien, Oskar-Morgenstern-Platz 1 -
1090 - Vienna, Austria}
\email{hurson\_omid@yahoo.com}
\begin{document}

\maketitle

\begin{abstract}
In {\it Homfly polynomial via an invariant of colored plane graphs},
Murakami, Ohtsuki, and Yamada provide a
state-sum description of the level $n$ Jones
polynomial of an oriented link in terms of a suitable braided
monoidal category whose morphisms are $\mathbb{Q}[q,q^{-1}]$-linear combinations of
oriented trivalent planar graphs, and give a corresponding
description for the HOMFLY-PT polynomial. 
We
extend this construction and express the Khovanov--Rozansky
homology of an oriented link in terms of a combinatorially defined
category whose morphisms are equivalence classes of formal
complexes of (formal direct sums of shifted) oriented trivalent
plane graphs. By working combinatorially, one avoids many of the
computational difficulties involved in the matrix factorization
computations of the original Khovanov--Rozansky formulation: one
systematically uses combinatorial relations satisfied by these
matrix factorizations to simplify the computation at a level that is
easily handled. By using this technique, we are able to provide a
computation of the level $n$ Khovanov--Rozansky invariant of the 2-strand braid link with $k$ crossings, for arbitrary $n$ and $k$, confirming
and extending previous results and conjectural predictions by
Anokhina--Morozov, Beliakova--Putyra--Wehrli, Carqueville--Murfet, Dolotin--Morozov, Gukov--Iqbal--Kozcaz--Vafa, Nizami--Munir--Sohail--Usman, and Rasmussen.
\end{abstract}

\setcounter{tocdepth}{1}
\tableofcontents

\section{Introduction}

A nontrivial refinement of the Jones polynomial has been obtained by Khovanov and Rozansky \cite{Khovanov-Rozansky}. Their link invariant is a polynomial in \emph{two} variables $q,t$ and their inverses, which specializes to the Jones polynomial when it is evaluated at $t=-1$. For instance
\[
KR_n\left(\,\hopf\,\right)=q^{2(n-1)}\left(q^{1-n}+t^2q^3[n-1]_q\right)[n]_q.
\]
The Khovanov-Rozansky invariant is generally very hard to compute, as it does not satisfy a skein relation, and actually
 the original definition of the Khovanov-Rozansky invariant is rather complicated: it involves complexes of matrix factorizations of certain Landau-Ginzburg potentials (polynomials in some set of variables with no constant or linear term, with finite dimensional Jacobian algebra). On the other hand,  Murakami,  Ohtsuki and Yamada have shown in \cite{moy} that, in order to derive the Jones polynomial, the braided monoidal category $\mathrm{Rep}(U_q(\mathfrak{sl}_n))$ appearing in the ``quantum traces'' derivation of the Jones polynomial can be conveniently replaced by a simple purely combinatorial category defined in terms of planar trivalent graphs. This naturally suggests the following question: is it possible to replace also the rather complicated algebraic category of Landau-Ginzburg potentials and matrix factorizations with a simple purely combinatorial category defined in terms of planar trivalent graphs?

\bigskip

The main result of this article consists in giving a positive answer to this question. In Section \ref{chapter:KR} we define a braided monoidal category $\mathbf{KR}$ of formal complexes of (formal direct sums of shifted) MOY-type graphs, that encodes combinatorially all of the features of the category of complexes of matrix factorizations that are used by Khovanov and Rozansky in order to construct their link invariant. More in detail, in \cite{Khovanov-Rozansky}, Khovanov and Rozansky consider a monoidal category which could be called the ``homotopy category of chain complexes in the Landau-Ginzburg category'' and therefore denoted by the symbol $\mathcal{K}(\mathcal{LG})$. Objects of $\mathcal{K}(\mathcal{LG})$ are Landau-Ginzburg potentials, i.e., polynomials $W(\vec{x})$ in the variables $\vec{x}=(x_1,x_2,\dots)$ over the field $\mathbb{Q}$ (or an extension of it), such that the Jacobian ring $\mathbb{Q}[x_1,x_2,\dots]/(\partial_1W,\partial_2W,\dots)$ is finite dimensional. Morphisms between $W_1(\vec{x})$ and $W_2(\vec{y})$ are homotopy equivalence classes of complexes of matrix factorizations of $W_1(\vec{x})-W_2(\vec{y})$. The category $\mathcal{K}(\mathcal{LG})$ is rigid, and it is expected to be braided.\footnote{Unfortunately, we are not aware of any reference proving (or disproving) this fact. Our impression is that most experts are quite certain that $\mathcal{K}(\mathcal{LG})$ is indeed a braided monoidal category.}  Even without knowing for sure whether $\mathcal{K}(\mathcal{LG})$ is braided or not, Khovanov and Rozansky are nevertheless able to define a rigid monoidal functor from the category of tangle diagrams to $\mathcal{K}(\mathcal{LG})$,
\[
KH_n\colon\mathbf{TD}\to \mathcal{K}(\mathcal{LG}),
\]
with $KH_n(\uparrow)=x^{n+1}$, thus realizing $W(x)=x^{n+1}$ as a \emph{braided object} in $\mathcal{K}(\mathcal{LG})$. For a link $\Gamma$ the complex of matrix factorizations $KH_n(\Gamma)$ is a complex of matrix factorizations of the potential 0, and so it is simply a bicomplex of graded $\mathbb{Q}$-vector spaces. Up to homotopy, this is identified by the cohomology of its total complex. This is a finite dimensional  bigraded $\mathbb{Q}$-vector space, called the Khovanov-Rozansky cohomology of the link $\Gamma$. As the only invariant of a finite dimensional bigraded vector space is its graded dimension, the datum of the Khovanov-Rozansky cohomology of $\Gamma$ is equivalently the datum of a polynomial in $\mathbb{Z}[q,q^{-1},t,t^{-1}]$. This is precisely the Khovanov-Rozansky invariant mentioned above: $KR_n(\Gamma)=\dim_{q,t}KH_n(\Gamma)$.
\par 
As one could expect, the construction of $KH$ is in two steps: first, Khovanov and Rozansky define a rigid monoidal functor $KH_n\colon \mathbf{PTD}\to \mathcal{K}(\mathcal{LG})$ with $KH_n(\uparrow)=x^{n+1}$, where $\mathbf{PTD}$ denotes the rigid monoidal category of plane tangle diangrams modulo plane homotopies, and then they show that $KH$ is invariant with respect to the Reidemeister moves thus inducing a braided monoidal functor 
$KH_n\colon \mathbf{TD}\to \mathcal{K}(\mathcal{LG})$. The proof of this fact is quite nontrivial and  uses subtle properties of the complexes of matrix factorizations involved with the definition of $KH_n$. What we do in Section \ref{chapter:KR} is precisely to extract from Khovanov and Rozansky's proof all the properties of these complexes of matrix factorizations that are actually used in the proof, as well as additional properties demonstrated by Rasmussen in \cite{Rasmussen}, and change them into defining properties for a suitable monoidal category $\mathbf{KR}$. By this, one tautologically has that the monoidal functor $KH$ factors as
 \begin{equation}\label{eq:krr1}\tag{$\ast$}
\xymatrix{
\mathbf{TD}\ar[r]^{Kh_n}\ar@/_2pc/[rr]_{KH_n} &\mathbf{KR} \ar[r]^{\mathrm{mf}} &\mathcal{K}(\mathcal{LG})
}
\end{equation}
for some monoidal functor $\mathrm{mf}\colon \mathbf{KR}\to \mathcal{K}(\mathcal{LG})$ with $\mathrm{mf}(\uparrow)=x^{n+1}$, where `mf' stands for `matrix factorizations'. Moreover $\mathrm{mf}$ is compatible with shifts and direct sums. This implies that it commutes with the tensor product by graded $\mathbb{Q}$-vector spaces (with a fixed basis).
\par
Unfortunately we are not able to prove whether $\mathrm{mf}$ is an embedding of categories. Clearly, it is injective on objects, but its faithfulness on morphisms is elusive: there could be nonequivalent formal complexes of MOY-type graphs leading to homotopy equivalent complexes of matrix factorizations. Actually, our opinion is that $\mathrm{mf}$ should not be expeted to be faithful: it is likely that in $\mathcal{K}(\mathcal{LG})$ there are many more relations than those involved in the proof of the Reidemeister invariance of $KH_n$ by Khovanov and Rozansky and those evidentiated by Rasmussen.
Yet, if for a link $\Gamma$ we have
\[
Kh_n(\Gamma)=[V_\bullet]\otimes \emptyset,
\]
where $\emptyset$ is the empty diagram and  
\[
V_\bullet=\left(\cdots\to V_{i-1}\xrightarrow{0} V_i\xrightarrow{0} V_{i+1}\cdots \right),
\]
is some complex of graded $\mathbb{Q}$-vector spaces with trivial differentials,
then the commutativity of the diagram \ref{eq:krr1} implies that we have
\[
KH_n(\Gamma)=\oplus_i V_i,
\]
with the graded $\mathbb{Q}$-vector space $V_i$ in ``horizontal'' degree $i$. This precisely means that, for those links such that $Kh_n(\Gamma)=[V_\bullet]\otimes \emptyset$, the functor $Kh_n$ computes the Khovanov-Rozansky homology. This fact is extensively used in Section \ref{chapter:computation} to compute the level $n$ Khovanov-Rozansky polynomial for all 2-strands braid links with $k$ crossings (this is actually a knot when $k$ is odd and a 2-component link when $k$ is even). 
Namely, we obtain
\[
Kh_n\left(\,\trsplusk{2k+1}\,\right)=q^{(n-1)(2k+1)}\left(q^{1-n}[n]_q+(q^{-n}+tq^n)\frac{t^2q^4-t^{2k+2}q^{4k+4}}{1-t^2q^4}[n-1]_q \right)
\]
and
\[
Kh_n\left(\,\trsplusk{2k}\,\right)
=q^{(n-1)(2k)}\left(q^{1-n}[n]_q+(q^{-n}+tq^n)\frac{t^2q^4-t^{2k}q^{4k}}{1-t^2q^4}[n-1]_q+t^{2k}q^{4k-1}[n]_q[n-1]_q \right).
\]
 This perfectly matches (and so confirms and extends) the results obtained or predicted for this kind of knots and links by:
  
  \begin{itemize}
  \item Rasmussen for the odd $k$'s and $n\geq 4$ \cite{Rasmussen}; 
  
  \item Carqueville and Murfet for low values of $k$ and $n$ (computer assisted computation) \cite{Carqueville-Murfet};
  
   \item Beliakova--Putyra--Wehrli and Nizami--Munir--Sohail--Usman for $n=2$ and all $k$  \cite{bpw19,nmsu16}; 
  
  \item Gukov--Iqbal--Koz\c{c}az--Vafa for $k=2$ and all $n$ (string theory based prediction) \cite{Gukov};
  
  \item  Anokhina, Dolotin, and Morozov for all $k$ and $n$ (algebraic combinatorics based prediction) \cite{Morozov1,Morozov2}.

\end{itemize}
\vskip .5 cm

\noindent {\it Acknowledgements.} The authors thank Nils Carqueville for several inspiring conversations on the subject of this article. OH thanks Nils Carqueville for constant encouragement and support during his years as a PhD student in Mathematics at University of Vienna. This paper is part of the activities of the MIUR Excellence Department
Projects CUP:B83C23001390001.
DF has been partially supported by the PRIN 2017 {\em Moduli and Lie
Theory}, Ref.\ 2017YRA3LK. He is a member of the {\em Grup\-po Nazionale per le
Strutture Algebriche, Geometriche e le loro Applicazioni} (GNSAGA-INdAM).

\section{The category $\mathbf{KR}$ of formal complexes of MOY graphs}\label{chapter:KR}

\subsection{Formal direct sums of shifted MOY-type graphs}
Morphisms in the category $\mathbf{KR}$ will be certain formal complexes of formal direct sums of shifted planar directed graphs of the kind considered by Murakami,  Ohtsuki and Yamada  in \cite{moy} (MOY type graphs for short). We will construct these in steps, starting with the graphs.
\begin{definition}
The collection of graphs of \emph{MOY type} is the collection of oriented planar diagrams with labelled edges  that can ``built'' out of the following elementary pieces:
\[
\idone\,\,,\, \ev\,,\, \coev\,,\, \esse\,,\, \ti
\]
and their duals, i.e., the graphs obtained by reversing all the arrows in the graphs in the above list. The elementary pieces in the list above will be denoted by $\mathrm{id}_1,\mathrm{ev}_1,\mathrm{coev}_1, S$, and $T$, respectively.
\end{definition}

\begin{definition} A \emph{shifted} MOY-type graph is an object of the form $\Gamma\{m\}$ where $\Gamma$ is a MOY-type graph and $m$ is an integer, which we will call the ``shift'' in accordance with the terminology in \cite{Khovanov-Rozansky}. A formal sum of shifted MOY-type graphs is an expression of the form
\[
\oplus_i \Gamma_i\{m_i\}
\]
where $i$ ranges over a finite set of indices, and all the MOY-type graphs in the above expression have the same source and the same target data (i.e., the same source and target sequences of $\uparrow^1$'s and $\downarrow^1$'s).
\end{definition}

\begin{remark}
In \cite{Khovanov-Rozansky} the shift numbers correspond to actual degree shifts in complexes of matrix factorizations. Here they are just labels. By construction, the two shifts are identified by the map 
\[
\mathbf{KR}\xrightarrow{\mathrm{mf}}\mathcal{K}(\mathcal{LG})
\]
from  the combinatorial category $\mathbf{KR}$ to the ``derived Ginzburg-Landau category'' of homotopy classes of complexes of matrix factorizations $\mathcal{K}(\mathcal{LG})$ described in the Introduction.
\end{remark}
\begin{notation}
The label $0$ will usually be omitted, i.e., we will simply write $\Gamma$ for $\Gamma\{0\}$. 
\end{notation}
\begin{notation}
For every nonnegative integer $m$, we will write $[m]\,\Gamma$ for the formal direct sum
\[
[m]\,\Gamma=\Gamma\{1-m\}\oplus \Gamma\{3-m\}\oplus \cdots \oplus \Gamma\{m-1\}.
\]
When $\Gamma$ is the empty diagram we will often omit it from the notation, i.e., we will simply write $[m]$ for $[m]\emptyset$. Also, we will write $\mathbb{Q}\{m\}$ for $\emptyset\{m\}$, so that
\[
[m]=\mathbb{Q}\{1-m\}\oplus \mathbb{Q}\{3-m\}\oplus \cdots \oplus \mathbb{Q}\{m-1\}.
\]
\end{notation}
\begin{remark}
We have a natural notion of direct sum of formal sums of shifted MOY-type graphs, as well as a tensor product of formal sums of MOY-type graphs by finite dimensional graded $\mathbb{Q}$-vector spaces with graded bases (i.e., with a given isomorphism of graded  $\mathbb{Q}$-vector spaces to a graded  $\mathbb{Q}$-vector space of the form
\[
\oplus_i \mathbb{Q}^{n_i}\{m_i\},
\]
where $\{m_i\}$ denotes an actual degree shift here. The tensor product rule is, clearly,
\[
(\mathbb{Q}^k\{m\})\otimes \Gamma\{l\}=\underbrace{\Gamma\{m+l\}\oplus \Gamma\{m+l\}\oplus\cdots \Gamma\{m+l\}}_{k\text{ times}},
\]
extended by bilinearity to formal direct sums of MOY-type graphs and direct sums of graded $\mathbb{Q}$-vector spaces.

\end{remark}

\begin{remark}
We can extend the composition and tensor product of MOY-type graphs (defined by concatenation and juxtaposition) to shifthed MOY-type graphs by the rule
\[
\Gamma\{m\}\circ \Phi\{k\}=\Gamma\circ\Phi\{m+k\}; \qquad \Gamma\{m\}\otimes \Phi\{k\}=\Gamma\otimes\Phi\{m+k\}.
\]
As all MOY-type graphs in a formal direct sum have the same source sequence and the same target sequence of of $\uparrow^1$'s and $\downarrow^1$'s, we can extend these operatins by bilinearity and define a composition $M\circ N$ and a tensor product $M\otimes N$ of formal direct sums $M$ and $N$ of shifted MOY-type graphs. All the graphs appearing in  $M\circ N$ and in $M\otimes N$ will be shifted copies of MOY-type graphs all having the same source sequence and the same target sequence of of $\uparrow^1$'s and $\downarrow^1$'s; so both $M\circ N$ and  $M\otimes N$ will be again formal direct sums of shifted MOY-type graphs.
\end{remark}

\subsection{Morphisms between formal direct sums}
The next step consists in defining a set of morphisms between the formal direct sums of MOY-type graphs introduced in the previous section. Since we may think of formal direct sums of MOY-type graphs as morphisms between sequences of $\uparrow^1$'s and $\downarrow^1$'s, this step suggests we are here going towards the construction of a 2-category. This is indeed so. However, we will not be interested in constructing such a 2-category in full detail, as we are going to eventually decategorify it by taking suitable equivalence classes of 2-morphisms. Therefore, we will not need completely constructing the 2-category: we will directly introduce certain equivalence classes of complexes of formal sums of shifted MOY-type graphs which will be our 1-morphisms between sequences of $\uparrow^1$'s and $\downarrow^1$'s. These can be thought of as equivalence classes of 2-morphisms, although we actually do not really construct the 2-category. Before introducing complexes of (formal sums of shifted) MOY-type graphs, we need define what the arrows in these complexes (i.e., the morphisms between MOY-type graphs) would be. This is done in several steps.
\subsection{Elementary morphisms}
In this section we introduce a set of generators for the morphisms in a category of MOY-type graphs. We will call these generators ``elementary morphism'', although at this stage they are not, of course, yet morphisms in a category. The idea is that the morphisms in the category we are going to describe will be ``built up'' starting from these ``elementary morphisms''. Sources and targets of these elementary morphisms will be formal directed sums of shifted  MOY-type graphs as defined in the previous section.
Finally $n$ will be a fixed integer with $n\geq 2$.

\begin{definition}\label{def:generating}
The set $\mathcal{E}$ of \emph{elementary morphisms} is the collection of arrows $\{1_\Gamma, i^u_\Gamma,i_{d;\Gamma}, \chi_0,\chi_1,\alpha,\gamma,\lambda, \epsilon, \mu, \varphi,\eta \}$, with $\Gamma$ ranging among all MOY graphs,
where
\begin{itemize}
\item $1_\Gamma$ is the identity arrow
\[
1_\Gamma\colon \Gamma \to \Gamma
\]
(we will often denote this arrow just by $1$ leaving $\Gamma$ be understood by the context)
\item $i^u_\Gamma$ and $i_{d;\Gamma}$ are two invertible arrows
\[
i^u_\Gamma\colon \Gamma \xrightarrow{\sim}  \mathrm{id}_{t(\Gamma)}\circ \Gamma; \qquad \qquad i_{d;\Gamma}\colon \Gamma \xrightarrow{\sim} \Gamma\circ \mathrm{id}_{s(\Gamma)}
\]
where $s(\Gamma)$ and $t(\Gamma)$ denote the source and the target of $\Gamma$, respectively.
(we will often denote these arrows just by $i^u$ and $i_d$, leaving $\Gamma$ be understood by the context)
\item $\chi_0$ is a distinguishes arrow
\[
\chi_0\colon \mathrm{id}_2\to S\{1\},
\]
where $\mathrm{id}_2=\mathrm{id}_1\otimes \mathrm{id}_1$,
i.e., in graphical notation,
\[
\idtwo\xrightarrow{\phantom{mm}\chi_0\phantom{mm}} \,
\esse\{1\};
\]
\item $\chi_1$ is a distinguishes arrow
\[
\chi_1\colon S\{-1\}\to \mathrm{id}_2,
\]
i.e., in graphical notation,
\[
\esse\{-1\}
\xrightarrow{\phantom{mm}\chi_1\phantom{mm}} \,
\idtwo;
\]

\item $\epsilon$ is a distinguished arrow
\[
\mathrm{id}_1\{-1\}\xrightarrow{\epsilon} \mathrm{id}_1\{1\}
\]
i.e., in graphical notation,
\[
\epsilon\colon \idone\,\,\{-1\}\to \idone\,\,\{1\};
\]

\item $\lambda$ is a distinguished invertible arrow
\[
\mathrm{ev}_1\circ \mathrm{coev}_1\xrightarrow[\sim]{\lambda} [n]=[n]\,\emptyset,
\]
i.e., in graphical notation,
\[
\lambda\colon \iunknot\xrightarrow{\sim} [n]=[n]\,\emptyset
\]

\item $\mu$ is a distinguished invertible arrow
\[
\mu\colon (\mathrm{ev}_1\otimes \mathrm{id}_1)\circ(\mathrm{id}_1\otimes S)\circ (\mathrm{coev}_1\otimes \mathrm{id}_1)\xrightarrow{\sim}
[n-1]\, \mathrm{id}_1,
\]
i.e., in graphical notation
\[
{\xy
(0,-10);(0,10)**\crv{(0,-7)&(0,-6)&(0,-5)&(0,0)&(0,5)&(0,6)&(0,7)}
?>(.2)*\dir{>}?>(.99)*\dir{>}?>(.55)*\dir{>}
,(0,-3);(0,3)**\crv{(0,-4)&(0,-5)&(-4,-6)&(-6,0)&(-4,6)&(0,5)&(0,4)}
?>(.55)*\dir{<}
,(2,-11)*{\scriptstyle{1}}
,(2,11)*{\scriptstyle{1}}
,(2,0)*{\scriptstyle{2}}
,(-6,0)*{\scriptstyle{1}}
\endxy}\,\,\,\xrightarrow[\sim]{\phantom{mm}\mu\phantom{mm}}
[ n-1]\,\,{\xy
(0,-10);(0,10)**\dir{-}
?>(.95)*\dir{>}
,(2,-11)*{\scriptstyle{1}}
,(2,11)*{\scriptstyle{1}}
\endxy}
\]
\item $\varphi$ and $\psi$ are two a distinguished invertible arrows
\[
\varphi,\psi\colon S\circ S\xrightarrow{\sim} S\{-1\}\oplus S\{1\},
\]
i.e., in graphical notation
\[
\essecircesse\xrightarrow[\sim]{\phantom{mm}\varphi,\psi\phantom{mm}}
\esse\{-1\}
\oplus
\esse\{1\}
\]

\item $\alpha,\gamma$  are distinguished arrows
\[
\alpha,\gamma\colon S\{-1\}\to S\{1\},
\]
i.e., in graphical notation
\[
\esse\{-1\}\xrightarrow{\phantom{mm}\alpha,\gamma\phantom{mm}}
\esse\{1\}
\]

\item $\nu$ is a distinguished invertible arrow
\[
\nu\colon (\mathrm{id}_1\otimes \mathrm{ev}_1\otimes\mathrm{id}_1^\vee)\circ (S\otimes S^\vee) \circ (\mathrm{id}_1\otimes \mathrm{coev}_1\otimes\mathrm{id}_1^\vee)
\xrightarrow{\sim} \mathrm{coev}_1\circ\mathrm{ev}_1\oplus [n-2]\otimes \mathrm{id}_1\otimes \mathrm{id}_1^\vee
\]
i.e., in graphical notation
\[
{\xy
(-10,-10);(-10,10)**\crv{(-6,-6)&(-5,-5)&(-5,-2)&(-5,-2)&(-5,5)&(-6,6)}
?>(.05)*\dir{>}?>(.99)*\dir{>}?>(.55)*\dir{>}
,(10,-10);(10,10)**\crv{(6,-6)&(5,-5)&(5,-2)&(5,2)&(5,5)&(6,6)}
?>(.03)*\dir{<}?>(.92)*\dir{<}?>(.45)*\dir{<}
,(-5,0);(5,0)**\crv{(-5,-2)&(-5,-3)&(-4,-5)&(-2,-6)&(2,-6)&(4,-5)&(5,-3)&(5,-2)}?>(.5)*\dir{<}
,(-5,0);(5,0)**\crv{(-5,2)&(-5,3)&(-4,5)&(-2,6)&(2,6)&(4,5)&(5,3)&(5,2)}?>(.5)*\dir{>}
,(-11,-10)*{\scriptstyle{1}}
,(-11,10)*{\scriptstyle{1}}
,(11,-10)*{\scriptstyle{1}}
,(11,10)*{\scriptstyle{1}}
,(-7,0)*{\scriptstyle{2}}
,(0,-8)*{\scriptstyle{1}}
,(6.5,0)*{\scriptstyle{2}}
,(0,8)*{\scriptstyle{1}}
\endxy}
\quad
\xrightarrow[\sim]{\nu}
\xy
(-5,-10);(5,-10)**\crv{(-5,-2)&(0,-2)&(5,-2)}
?>(1)*\dir{>},
(-6,-10)*{\scriptstyle{1}},(6,-10)*{\scriptstyle{1}}
,(-5,10);(5,10)**\crv{(-5,2)&(0,2)&(5,2)}
?>(0)*\dir{<},
(-6,10)*{\scriptstyle{1}},(6,10)*{\scriptstyle{1}}
\endxy
\oplus\, [n-2]
{\xy
(-5,-10);(-5,10)**\dir{-}?>(.5)*\dir{>}
,(5,-10);(5,10)**\dir{-}?>(.5)*\dir{<}
,(-7,-10)*{\scriptstyle{1}}
,(7,-10)*{\scriptstyle{1}}
,(-7,10)*{\scriptstyle{1}}
,(7,10)*{\scriptstyle{1}}
\endxy}
\]
\item $\eta$ is a distinguished invertible arrow
\[
\eta\colon (S\otimes\mathrm{id}_1)\circ (\mathrm{id}_1\otimes S) \circ (S\otimes\mathrm{id}_1)\xrightarrow{\sim} (S\otimes \mathrm{id}_1)\oplus T
\]
i.e., in graphical notation
\[
\coso
\xrightarrow[\sim]{\phantom{mm}\eta\phantom{mm}}
\esse\,\,\idoner\qquad
\oplus \ti
\]
\end{itemize}
The set $\mathcal{E}$ also contains all the vertical and horizontal reflections of the above arrows, which we will denote by the same symbols. For instance, we also have in $\mathcal{E}$ the distinguished invertible arrow
\[
{\xy
(0,-10);(0,10)**\crv{(0,-7)&(0,-6)&(0,-5)&(0,0)&(0,5)&(0,6)&(0,7)}
?>(.2)*\dir{>}?>(.99)*\dir{>}?>(.55)*\dir{>}
,(0,-3);(0,3)**\crv{(0,-4)&(0,-5)&(4,-6)&(6,0)&(4,6)&(0,5)&(0,4)}
?>(.55)*\dir{<}
,(2,-11)*{\scriptstyle{1}}
,(2,11)*{\scriptstyle{1}}
,(-2,0)*{\scriptstyle{2}}
,(7,0)*{\scriptstyle{1}}
\endxy}\xrightarrow[\sim]{\phantom{mm}\mu\phantom{mm}}
[ n-1]\,\,{\xy
(0,-10);(0,10)**\dir{-}
?>(.95)*\dir{>}
,(2,-11)*{\scriptstyle{1}}
,(2,11)*{\scriptstyle{1}}
\endxy}
\]
etc. 
\end{definition}

We will usually omit the morphisms $i^u$ and $i_d$ from the notation. So, for instance, we will simply write
\[
S\xrightarrow{\chi_0\circ 1} S\circ S\{1\}
\]
to mean the composition
\[
S\xrightarrow{i^u_S}\mathrm{id}_2\circ S\xrightarrow{\chi_0\circ 1}S\circ S\{1\}
\]
Graphically, this means that we simply write
\[
\esse
\xrightarrow{\chi_0\circ 1} 
\essecircesse
\]
to actually mean the composition
\[
\esse
\xrightarrow{i^u_S}
{\xy
(-5,-14);(-5,14)**\crv{(-1,-10)&(0,-9)&(0,-6)&(0,-2)&(0,0)&(-5,2)&(-5,3)}
?>(.55)*\dir{>}?>(.1)*\dir{>}?>(.95)*\dir{>}
,(5,-14);(5,14)**\crv{(1,-10)&(0,-9)&(0,-6)&(0,-2)&(0,0)&(5,2)&(5,3)}
?>(.1)*\dir{>}?>(.95)*\dir{>}
,(-7,-14)*{\scriptstyle{1}}
,(7,-14)*{\scriptstyle{1}}
,(-7,14)*{\scriptstyle{1}}
,(7,14)*{\scriptstyle{1}}
,(2,-4)*{\scriptstyle{2}}
\endxy}\,
\xrightarrow{\chi_0\circ 1} 
\essecircesse
\]
Also, when there are no ambiguities, we will also omit the identity morphisms, so for instance we will simply write
\[
\iunknot\,\,\{-1\}\xrightarrow{\epsilon} \iunknot\,\,\{1\}
\]
to mean the composition
\[
\iunknot\,\,\{-1\} \xrightarrow[\sim]{1\circ i^u_{\mathrm{coev}}} {\xy
(0,-10);(0,-10)**\crv{(5,-10)&(5,10)&(-5,10)&(-5,-10)}
?>(.3)*\dir{>},(-4,-9)*{\scriptstyle{1}}
\endxy}\,\,\{-1\} \xrightarrow{1\circ(1\otimes \epsilon)\circ 1} {\xy
(0,-10);(0,-10)**\crv{(5,-10)&(5,10)&(-5,10)&(-5,-10)}
?>(.3)*\dir{>},(-4,-9)*{\scriptstyle{1}}\endxy}\,\,\{1\}\xrightarrow[\sim]{1\circ (i^u_{\mathrm{coev}})^{-1}}\iunknot\,\,\{1\}
\]
Finally, if $f\colon \Gamma_1\to \Gamma_2$ is an elementary morphism, we denote by the same symbol its ``shifted'' versions, i.e., we simply write $f\colon \Gamma_1\{m\}\to \Gamma_2\{m\}$ instead of the more precise $f\{m\}\colon \Gamma_1\{m\}\to \Gamma_2\{m\}$.

\begin{remark}
Notice that if $f\colon \Gamma_1\to \Gamma_2$ is a morphism between (formal direct sums of shifted) MOY-type graphs, then $\Gamma_1$ and $\Gamma_2$ have the same incoming and the same outgoing sequences of $\uparrow^1$'s and $\downarrow^1$'s. This is coherent with the idea we are (partially) constructing a 2-categorification of the category of graphs considered by Murakami,  Ohtsuki and Yamada  in \cite{moy}. 
\end{remark}

\subsection{Composite morphisms}
Starting from the elementary morphisms we can consider the ${\mathbb{Q}}$-vector space generated by the elements in  $\mathcal{E}$ by $\mathbb{Q}$-linear combinations, compositions and direct sums. More precisely, we give the following
\begin{definition}
The set $\mathcal{M}$ of morphisms between formal direct sums of MOY-type graphs is the smallest set with the following properties:
\begin{itemize}
\item $\mathcal{E}\subseteq \mathcal{M}$;

\item if $f,g\colon \Gamma_1\to \Gamma_2$ are $\mathcal{M}$, then for any $a,b\in \mathbb{Q}$, the element $a\,g+b\,g\colon \Gamma_1\to \Gamma_2$ is in $\mathcal{M}$ ($\mathcal{M}$ is a $\mathbb{Q}$-vector space);

\item if $f\colon \Gamma_1\to \Gamma_2$ and $g\colon \Gamma_2\to \Gamma_3$ are in $\mathcal{M}$, then $gf\colon \Gamma_1\to \Gamma_3$ is in $\mathcal{M}$ (horizontal composition);

\item if $f\colon \Gamma_1\to \Gamma_2$ and $g\colon \Psi_1\to \Psi_2$ are in $\mathcal{M}$, and the compositions $\Psi_i\circ\Gamma_i$ are defined,  then $g\circ f\colon \Psi_1\circ\Gamma_1\to \Psi_2\circ\Gamma_2$ is in $\mathcal{M}$ (vertical composition);

\item if $f\colon \Gamma_1\to \Gamma_2$ is in $\mathcal{M}$ and $m\in \mathbb{Z}$, then $f\{m\}\colon \Gamma_1\{m\}\to \Gamma_2\{m\}$ is in $\mathcal{M}$;

\item  if $f\colon \Gamma_1\to \Gamma_2$ and $g\colon \Psi_1\to \Psi_2$ are in $\mathcal{M}$, then $f\oplus g\colon \Gamma_1\oplus \Psi_1\to \Gamma_2\oplus \Psi_2$ is in $\mathcal{M}$.
\end{itemize}
In the above conditions, $\Gamma_i$ and $\Phi_i$ denote arbitary formal direct sums of shifted MOY-type graphs.
\end{definition}

\begin{remark}
By the above definition, the set $\mathcal{M}$ can be  recursively built from $\mathcal{E}$ by considering morphisms of increasing complexity. 
\end{remark}

\subsection{Relations between the elementary morphisms}\label{sec:relations}
As we want to think of $\mathcal{M}$ as the set of morphisms between formal direct sums of MOY-type graphs we need to impose associativity relations and identity element axiom on the horizontal composition in $\mathcal{M}$, i.e., we impose the relations
\[
(fg)h=f(gh); \qquad f\,1_\Gamma=1_\Phi\, f=f
\]
for any composable $f,g,h,1_\Gamma,1\Phi$ in $\mathcal{M}$. Also, as we want to think of the above construction as part of the construction of a 2-categorification of the category of graphs considered by Murakami,  Ohtsuki and Yamada, we also impose associativity and identity element axiom for the vertical composition as well as the compatibility between vertical and horizontal composition of elements in $\mathcal{M}$.
Yet, even after having quotiented out the associative, identity and compatibility relations, $\mathcal{M}$ is too big for any meaningful use: it is just the set of morphisms freely generated by our collection of elementary morphisms. To turn it in something that can actually be used to define a rigid braided monoidal category, and so ultimately to define link invariants we impose additional relations on the elements in $\mathcal{M}$.
\begin{remark}
In view of the higher categorical inspiration of the construction, these relations can be thought of as 3-morphism, so in a sense we are going to construct the 1-category $\mathbf{KR}$ by providing sketches of a 3-category that in the end we decategorify twice.
\end{remark}  
As $\mathcal{M}$ is generated by $\mathcal{E}$, in order to impose additional relations on $\mathcal{M}$ we impose a set of additional relations on certain compositions of elementary morphisms. The whole set of the additional relations will the be the set generated by these elementary relations.
\vskip .7 cm
The set of additional relations between compositions of elementary morphisms that we impose abstractly encodes all of the properties of complexes of matrix factorizations used by Khovanov and Rozansky in their proof of the invariance of level $n$ Khovanov homology by Reidemeister moves \cite{Khovanov-Rozansky}, as well as additional properties demonstrated by Rasmussen in \cite{Rasmussen}. It is the following set of relations:
\begin{itemize}
\item If $\mathbf{0}$ denotes the empty direct sum, then the diagrams
\[
\xymatrix{
\Gamma \ar@/_2pc/[r]_{f} \ar@/^2pc/[r]^{0}&\mathbf{0}
}
\qquad \text{and}\qquad
\xymatrix{
\mathbf{0} \ar@/_2pc/[r]_{f} \ar@/^2pc/[r]^{0}&\Gamma
}
\]
commute for any $f$ (i.e., the only morphism to and from $\mathbf{0}$ is the zero morphism);

\item For any wo composable MOY garphs, $\Phi$ and $\Gamma$  the following diagram commutes:
\[
\xymatrix{
\Phi\circ\Gamma \ar@/_2pc/[r]_{i_{d;\Phi}\circ 1_\Gamma} \ar@/^2pc/[r]^{1_\Phi\circ i^u_\Gamma}&\Phi\circ\mathrm{id}\circ\Gamma.
}
\] 

\item The following diagrams commute:
\begin{enumerate}
\item 
\[
\xymatrix{
\idtwo\{-1\}\ar[r]^-{\chi_0} \ar@/_4pc/[rr]_{0}& \esse\ar[r]^-{\chi_1}& \idtwo\{1\},
}
\]
i.e., we have $\chi_1\chi_0=0$;

\item 
\begin{equation}\tag{moy$\epsilon$}\label{eq:moyepsilon}
\xymatrix{
*++++{\iunknot\,\, \{-1\}} \ar[r]^-{\epsilon}\ar[d]_{\lambda}^{\wr} &
*++++{\phantom{mm}\iunknot\,\, \{1\}}\ar[d]_{\lambda}^{\wr}\\
[n]\{-1\}\ar@{=}[d] & [n]\{1\}\ar@{=}[d]\\
\mathbb{Q}\{-n\}\oplus [n-1] 
\ar[r]^-{\tiny{\left(\begin{matrix}0&1\\0 & 0\end{matrix}\right)}}&
[n-1]\oplus\mathbb{Q}\{n\} ;
}
\end{equation}

\item 
\begin{equation}\tag{moy$\chi_0$}\label{eq:moychi0}
\xymatrix{
\qquad\iunknot \quad {\xy
(0,-10);(0,10)**\dir{-}
?>(.95)*\dir{>}
,(2,-11)*{\scriptstyle{1}}
,(2,11)*{\scriptstyle{1}}
\endxy} \ar[r]^-{1_{\mathrm{id}_1}\otimes \chi_0}\ar[d]_-{\lambda\otimes 1}^-{\wr} &
{\xy
(0,-10);(0,10)**\crv{(0,-7)&(0,-6)&(0,-5)&(0,0)&(0,5)&(0,6)&(0,7)}
?>(.2)*\dir{>}?>(.99)*\dir{>}?>(.55)*\dir{>}
,(0,-3);(0,3)**\crv{(0,-4)&(0,-5)&(-4,-6)&(-6,0)&(-4,6)&(0,5)&(0,4)}
?>(.55)*\dir{<}
,(2,-11)*{\scriptstyle{1}}
,(2,11)*{\scriptstyle{1}}
,(-2,0)*{\scriptstyle{2}}
,(-6,0)*{\scriptstyle{1}}
\endxy}\{1\}\ar[d]^{\mu\{1\}}_{\wr}\\
{\xy
(0,-10);(0,10)**\dir{-}
?>(.95)*\dir{>}
,(2,-11)*{\scriptstyle{1}}
,(2,11)*{\scriptstyle{1}}
\endxy} \{1-n\}\oplus [n-1]\,{\xy
(0,-10);(0,10)**\dir{-}
?>(.95)*\dir{>}
,(2,-11)*{\scriptstyle{1}}
,(2,11)*{\scriptstyle{1}}
\endxy} \{1\}
\ar[r]^-{\tiny{\left(\begin{matrix}0&1\end{matrix}\right)}}& [n-1]\,{\xy
(0,-10);(0,10)**\dir{-}
?>(.95)*\dir{>}
,(2,-11)*{\scriptstyle{1}}
,(2,11)*{\scriptstyle{1}}
\endxy} \{1\}
}
\end{equation}

\item 
\begin{equation}\tag{moy$\chi_1$}\label{eq:moychi1}
\xymatrix{
{\xy
(0,-10);(0,10)**\crv{(0,-7)&(0,-6)&(0,-5)&(0,0)&(0,5)&(0,6)&(0,7)}
?>(.2)*\dir{>}?>(.99)*\dir{>}?>(.55)*\dir{>}
,(0,-3);(0,3)**\crv{(0,-4)&(0,-5)&(-4,-6)&(-6,0)&(-4,6)&(0,5)&(0,4)}
?>(.55)*\dir{<}
,(2,-11)*{\scriptstyle{1}}
,(2,11)*{\scriptstyle{1}}
,(-2,0)*{\scriptstyle{2}}
,(-6,0)*{\scriptstyle{1}}
\endxy}\{-1\}\ar[d]^{\mu\{-1\}}_{\wr}
 \ar[r]^-{1_{\mathrm{id}_1}\otimes \chi_1}
&
\qquad\quad \iunknot \quad {\xy
(0,-10);(0,10)**\dir{-}
?>(.95)*\dir{>}
,(2,-11)*{\scriptstyle{1}}
,(2,11)*{\scriptstyle{1}}
\endxy}\ar[d]_-{\lambda\otimes 1}^-{\wr} 
\\
[n-1]\,{\xy
(0,-10);(0,10)**\dir{-}
?>(.95)*\dir{>}
,(2,-11)*{\scriptstyle{1}}
,(2,11)*{\scriptstyle{1}}
\endxy} \{-1\}\ar[r]^-{\tiny{\left(\begin{matrix}1&0\end{matrix}\right)}}&
[n-1]\,{\xy
(0,-10);(0,10)**\dir{-}
?>(.95)*\dir{>}
,(2,-11)*{\scriptstyle{1}}
,(2,11)*{\scriptstyle{1}}
\endxy}\{-1\}\oplus {\xy
(0,-10);(0,10)**\dir{-}
?>(.95)*\dir{>}
,(2,-11)*{\scriptstyle{1}}
,(2,11)*{\scriptstyle{1}}
\endxy} \{n-1\} 
 }
\end{equation}

\item
\[
\xymatrix{
\esseb\{-1\}\ar[r]^-{\chi_0\circ 1} \ar@/_2pc/[rr]_{\alpha}& \essecircesseb\ar[r]^-{1\circ \chi_1}& \esseb\{1\}
}
\]
In other words, $\alpha=\chi_0\circ \chi_1$, i.e., $\alpha$ is the composition of $\chi_0$ and $\chi_1$. One could object that $\alpha$ is not elementary, as it is  a suitable compositions of $\chi_0$ and $\chi_1$. However, as it will show up several times in what follows, we find it convenient to think of $\alpha$ as an elementary morphism, which additionally enjoys a nice relationship with the other two elementary morphisms $\chi_0$ and $\chi_1$.

\item 
\begin{equation}\tag{moy$\chi_0$a}\label{eq:moichi0a}
\xymatrix{
\qquad{\xy
(0,-10);(0,10)**\crv{(0,-7)&(0,-6)&(0,-5)&(0,0)&(0,5)&(0,6)&(0,7)}
?>(.2)*\dir{>}?>(.99)*\dir{>}?>(.55)*\dir{>}
,(0,-3);(0,3)**\crv{(0,-4)&(0,-5)&(-4,-6)&(-6,0)&(-4,6)&(0,5)&(0,4)}
?>(.55)*\dir{<}
,(2,-11)*{\scriptstyle{1}}
,(2,11)*{\scriptstyle{1}}
,(-2,0)*{\scriptstyle{2}}
,(-6,0)*{\scriptstyle{1}}
\endxy}\{-1\} \ar[r]^-{1_{\mathrm{id}_1}\otimes \alpha}\ar[d]_-{\mu\{-1\}}^-{\wr} &
{\xy
(0,-10);(0,10)**\crv{(0,-7)&(0,-6)&(0,-5)&(0,0)&(0,5)&(0,6)&(0,7)}
?>(.2)*\dir{>}?>(.99)*\dir{>}?>(.55)*\dir{>}
,(0,-3);(0,3)**\crv{(0,-4)&(0,-5)&(-4,-6)&(-6,0)&(-4,6)&(0,5)&(0,4)}
?>(.55)*\dir{<}
,(2,-11)*{\scriptstyle{1}}
,(2,11)*{\scriptstyle{1}}
,(-2,0)*{\scriptstyle{2}}
,(-6,0)*{\scriptstyle{1}}
\endxy}\{1\}\ar[d]^{\mu\{1\}}_{\wr}\\
 [n-1]\,{\xy
(0,-10);(0,10)**\dir{-}
?>(.95)*\dir{>}
,(2,-11)*{\scriptstyle{1}}
,(2,11)*{\scriptstyle{1}}
\endxy} \{-1\}
\ar[r]^-{0}& [n-1]\,{\xy
(0,-10);(0,10)**\dir{-}
?>(.95)*\dir{>}
,(2,-11)*{\scriptstyle{1}}
,(2,11)*{\scriptstyle{1}}
\endxy} \{1\}
}
\end{equation}

\item 
\begin{equation}\tag{moy$\chi_0$b}\label{eq:moichi0b}
\xymatrix{
\qquad{\xy
(0,-10);(0,10)**\crv{(0,-7)&(0,-6)&(0,-5)&(0,0)&(0,5)&(0,6)&(0,7)}
?>(.2)*\dir{>}?>(.99)*\dir{>}?>(.55)*\dir{>}
,(0,-3);(0,3)**\crv{(0,-4)&(0,-5)&(-4,-6)&(-6,0)&(-4,6)&(0,5)&(0,4)}
?>(.55)*\dir{<}
,(2,-11)*{\scriptstyle{1}}
,(2,11)*{\scriptstyle{1}}
,(-2,0)*{\scriptstyle{2}}
,(-6,0)*{\scriptstyle{1}}
\endxy}\{-1\} \ar[r]^-{1_{\mathrm{id}_1}\otimes \gamma}\ar[d]_-{\mu\{-1\}}^-{\wr} &
{\xy
(0,-10);(0,10)**\crv{(0,-7)&(0,-6)&(0,-5)&(0,0)&(0,5)&(0,6)&(0,7)}
?>(.2)*\dir{>}?>(.99)*\dir{>}?>(.55)*\dir{>}
,(0,-3);(0,3)**\crv{(0,-4)&(0,-5)&(-4,-6)&(-6,0)&(-4,6)&(0,5)&(0,4)}
?>(.55)*\dir{<}
,(2,-11)*{\scriptstyle{1}}
,(2,11)*{\scriptstyle{1}}
,(-2,0)*{\scriptstyle{2}}
,(-6,0)*{\scriptstyle{1}}
\endxy}\{1\}\ar[d]^{\mu\{1\}}_{\wr}\\
 [n-1]\,{\xy
(0,-10);(0,10)**\dir{-}
?>(.95)*\dir{>}
,(2,-11)*{\scriptstyle{1}}
,(2,11)*{\scriptstyle{1}}
\endxy} \{-1\}
\ar[r]^-{\epsilon}& [n-1]\,{\xy
(0,-10);(0,10)**\dir{-}
?>(.95)*\dir{>}
,(2,-11)*{\scriptstyle{1}}
,(2,11)*{\scriptstyle{1}}
\endxy} \{1\}
}
\end{equation}

\item 
\begin{equation}\tag{moy2a}\label{eq:moy2a}
\xymatrix{
\esseb\{-1\}
\ar@/_1pc/[dr]_-{\left(\begin{smallmatrix}
1\\ 0
\end{smallmatrix}\right)} \ar[r]^{1\circ \chi_0}&
\quad
\essecircesseb
\ar^-{\varphi}[d]\\
 &
\esseb\{-1\} \oplus \esseb\{1\}
}\end{equation}

\item 
\begin{equation}\tag{moy2b}\label{eq:moy2b}
\xymatrix{
\esseb\{-1\}
\ar@/_1pc/[dr]_-{\left(\begin{smallmatrix}
1\\ \alpha
\end{smallmatrix}\right)} \ar[r]^{\chi_0\circ 1}&
\quad
\essecircesseb
\ar^-{\varphi}[d]\\
 &
\esseb\{-1\} \oplus \esseb\{1\}
}\end{equation}

\item 
\begin{equation}\tag{moyz}\label{eq:moyz}
\xymatrix{
\esse\,\,\idoner
 \ar@/_4pc/[rr]_{0} \ar@/^4pc/[rr]^{(1_S\otimes 1)\circ(1\otimes\chi_0 )}&&\zorro\{1\}
 },
\end{equation}
i.e., we have
\[
(1_S\otimes 1)\circ(1\otimes\chi_0 )=0;
\]

\item 
\begin{equation}\tag{moy2c}\label{eq:moy2c}
\xymatrix{
\esseb\{-1\}
\ar@/_1pc/[dr]_-{\left(\begin{smallmatrix}
1\\ \gamma
\end{smallmatrix}\right)} \ar[r]^{\chi_0\circ 1}&
\quad
\essecircesseb
\ar^-{\psi}[d]\\
 &
\esseb\{-1\} \oplus \esseb\{1\}
}\end{equation}

\item 
\begin{equation}\tag{shift1}\label{eq:shift}
\xymatrix{
\essecircesseb\ar[r]^-{\varphi}\ar[d]_{1\circ\alpha}& \esseb\{-1\} \oplus \esseb\{1\}\ar[d]^{\tiny{\left(\begin{matrix}0 & 1\\ 0 & 0\end{matrix}\right)}}\\
\essecircesseb\{2\}\ar[r]^-{\psi}&\esseb\{1\} \oplus \esseb\{3\}
}
\end{equation}

\item 
\begin{equation}\tag{shift2}\label{eq:shift2}
\xymatrix{
\essecircesseb\ar[r]^-{\psi}\ar[d]_{1\circ\gamma}& \esseb\{-1\} \oplus \esseb\{1\}\ar[d]^{\tiny{\left(\begin{matrix}0 & 1\\ 0 & 0\end{matrix}\right)}}\\
\essecircesseb\{2\}\ar[r]^-{\varphi}&\esseb\{1\} \oplus \esseb\{3\}
}
\end{equation}

\item 
\begin{equation}\tag{moy2d}\label{eq:moy2d}
\xymatrix{
\essecircesseb\ar[d]_{\varphi}\ar[r]^{1\circ\chi_1}&\esseb\{1\}\\
\esseb\{-1\} \oplus \esseb\{1\}\ar@/_1pc/[ru]_-{\tiny{\left(\begin{matrix}0&1\end{matrix}\right)}}
}
\end{equation}

\item 
\begin{equation}
\label{eq:moynu1}\tag{moy$\nu$1}
\xymatrix{
{\xy
(-10,-10);(-10,10)**\dir{-}?>(.5)*\dir{>}
,(-12,-11)*{\scriptstyle{1}}
,(-12,11)*{\scriptstyle{1}}
,(0,10);(0,-10)**\crv{(0,7)&(0,6)&(0,5)&(0,0)&(0,-5)&(0,-6)&(0,-7)}
?>(.2)*\dir{>}?>(.99)*\dir{>}?>(.55)*\dir{>}
,(0,3);(0,-3)**\crv{(0,4)&(0,5)&(-4,6)&(-6,0)&(-4,-6)&(0,-5)&(0,-4)}
?>(.55)*\dir{<}
,(2,-11)*{\scriptstyle{1}}
,(2,11)*{\scriptstyle{1}}
,(2,0)*{\scriptstyle{2}}
,(-6,0)*{\scriptstyle{1}}
\endxy}\{-1\}
\ar[rr]^-{
 \chi_0\otimes 1}
\ar[dd]_{1\otimes \mu}^{\wr}
&&
{\xy
(-10,-10);(-10,10)**\crv{(-6,-6)&(-5,-5)&(-5,-2)&(-5,-2)&(-5,5)&(-6,6)}
?>(.05)*\dir{>}?>(.99)*\dir{>}?>(.55)*\dir{>}
,(10,-10);(10,10)**\crv{(6,-6)&(5,-5)&(5,-2)&(5,2)&(5,5)&(6,6)}
?>(.03)*\dir{<}?>(.92)*\dir{<}?>(.45)*\dir{<}
,(-5,0);(5,0)**\crv{(-5,-2)&(-5,-3)&(-4,-5)&(-2,-6)&(2,-6)&(4,-5)&(5,-3)&(5,-2)}?>(.5)*\dir{<}
,(-5,0);(5,0)**\crv{(-5,2)&(-5,3)&(-4,5)&(-2,6)&(2,6)&(4,5)&(5,3)&(5,2)}?>(.5)*\dir{>}
,(-11,-10)*{\scriptstyle{1}}
,(-11,10)*{\scriptstyle{1}}
,(11,-10)*{\scriptstyle{1}}
,(11,10)*{\scriptstyle{1}}
,(-7,0)*{\scriptstyle{2}}
,(0,-8)*{\scriptstyle{1}}
,(6.5,0)*{\scriptstyle{2}}
,(0,8)*{\scriptstyle{1}}
\endxy}
\ar[dd]_{\nu}^{\wr}
\\
\\
[n-1]{\xy
(-5,-10);(-5,10)**\dir{-}?>(.5)*\dir{>}
,(-7,-11)*{\scriptstyle{1}}
,(-7,11)*{\scriptstyle{1}}
,(5,-10);(5,10)**\dir{-}?>(.5)*\dir{<}
,(7,-11)*{\scriptstyle{1}}
,(7,11)*{\scriptstyle{1}}
\endxy}\{-1\}\ar@{=}[dd]
&&
\xy
(-5,-10);(5,-10)**\crv{(-5,-2)&(0,-2)&(5,-2)}
?>(1)*\dir{>},
(-6,-10)*{\scriptstyle{1}},(6,-10)*{\scriptstyle{1}}
,(-5,10);(5,10)**\crv{(-5,2)&(0,2)&(5,2)}
?>(0)*\dir{<},
(-6,10)*{\scriptstyle{1}},(6,10)*{\scriptstyle{1}}
\endxy
\oplus\, [n-2]
{\xy
(-5,-10);(-5,10)**\dir{-}?>(.5)*\dir{>}
,(5,-10);(5,10)**\dir{-}?>(.5)*\dir{<}
,(-7,-10)*{\scriptstyle{1}}
,(7,-10)*{\scriptstyle{1}}
,(-7,10)*{\scriptstyle{1}}
,(7,10)*{\scriptstyle{1}}
\endxy}
\ar@{=}[dd]
\\ \\
(\mathbb{Q}\{1-n\}\oplus[n-2]){\xy
(-5,-10);(-5,10)**\dir{-}?>(.5)*\dir{>}
,(-7,-11)*{\scriptstyle{1}}
,(-7,11)*{\scriptstyle{1}}
,(5,-10);(5,10)**\dir{-}?>(.5)*\dir{<}
,(7,-11)*{\scriptstyle{1}}
,(7,11)*{\scriptstyle{1}}
\endxy}
\ar[rr]^-{\left(
\begin{smallmatrix}
0 & 0\\
0 &1 \\
\end{smallmatrix}\right)}
&&
\xy
(-5,-10);(5,-10)**\crv{(-5,-2)&(0,-2)&(5,-2)}
?>(1)*\dir{>},
(-6,-10)*{\scriptstyle{1}},(6,-10)*{\scriptstyle{1}}
,(-5,10);(5,10)**\crv{(-5,2)&(0,2)&(5,2)}
?>(0)*\dir{<},
(-6,10)*{\scriptstyle{1}},(6,10)*{\scriptstyle{1}}
\endxy
\oplus\,  [n-2]
{\xy
(-5,-10);(-5,10)**\dir{-}?>(.5)*\dir{>}
,(5,-10);(5,10)**\dir{-}?>(.5)*\dir{<}
,(-7,-10)*{\scriptstyle{1}}
,(7,-10)*{\scriptstyle{1}}
,(-7,10)*{\scriptstyle{1}}
,(7,10)*{\scriptstyle{1}}
\endxy}
}
\end{equation}

\item 
\begin{equation}
\label{eq:moynu2}\tag{moy$\nu$2}
\xymatrix{
{\xy
(-10,-10);(-10,10)**\crv{(-6,-6)&(-5,-5)&(-5,-2)&(-5,-2)&(-5,5)&(-6,6)}
?>(.05)*\dir{>}?>(.99)*\dir{>}?>(.55)*\dir{>}
,(10,-10);(10,10)**\crv{(6,-6)&(5,-5)&(5,-2)&(5,2)&(5,5)&(6,6)}
?>(.03)*\dir{<}?>(.92)*\dir{<}?>(.45)*\dir{<}
,(-5,0);(5,0)**\crv{(-5,-2)&(-5,-3)&(-4,-5)&(-2,-6)&(2,-6)&(4,-5)&(5,-3)&(5,-2)}?>(.5)*\dir{<}
,(-5,0);(5,0)**\crv{(-5,2)&(-5,3)&(-4,5)&(-2,6)&(2,6)&(4,5)&(5,3)&(5,2)}?>(.5)*\dir{>}
,(-11,-10)*{\scriptstyle{1}}
,(-11,10)*{\scriptstyle{1}}
,(11,-10)*{\scriptstyle{1}}
,(11,10)*{\scriptstyle{1}}
,(-7,0)*{\scriptstyle{2}}
,(0,-8)*{\scriptstyle{1}}
,(6.5,0)*{\scriptstyle{2}}
,(0,8)*{\scriptstyle{1}}
\endxy}
\ar[rr]^-{
1\otimes  \chi_1}
\ar[dd]_{\nu}^{\wr}
&&
{\xy
(0,-10);(0,10)**\crv{(0,-7)&(0,-6)&(0,-5)&(0,0)&(0,5)&(0,6)&(0,7)}
?>(.2)*\dir{>}?>(.99)*\dir{>}?>(.55)*\dir{>}
,(0,-3);(0,3)**\crv{(0,-4)&(0,-5)&(4,-6)&(6,0)&(4,6)&(0,5)&(0,4)}
?>(.55)*\dir{<}
,(2,-11)*{\scriptstyle{1}}
,(2,11)*{\scriptstyle{1}}
,(-2,0)*{\scriptstyle{2}}
,(7,0)*{\scriptstyle{1}}
,(10,-10);(10,10)**\dir{-}?>(.5)*\dir{<}
,(12,-11)*{\scriptstyle{1}}
,(12,11)*{\scriptstyle{1}}
\endxy}\{1\}
\ar[dd]_{\mu\otimes 1}^{\wr}
\\
\\
\xy
(-5,-10);(5,-10)**\crv{(-5,-2)&(0,-2)&(5,-2)}
?>(1)*\dir{>},
(-6,-10)*{\scriptstyle{1}},(6,-10)*{\scriptstyle{1}}
,(-5,10);(5,10)**\crv{(-5,2)&(0,2)&(5,2)}
?>(0)*\dir{<},
(-6,10)*{\scriptstyle{1}},(6,10)*{\scriptstyle{1}}
\endxy
\oplus\, [n-2]
{\xy
(-5,-10);(-5,10)**\dir{-}?>(.5)*\dir{>}
,(5,-10);(5,10)**\dir{-}?>(.5)*\dir{<}
,(-7,-10)*{\scriptstyle{1}}
,(7,-10)*{\scriptstyle{1}}
,(-7,10)*{\scriptstyle{1}}
,(7,10)*{\scriptstyle{1}}
\endxy}
\ar@{=}[dd]
&&
[n-1]{\xy
(-5,-10);(-5,10)**\dir{-}?>(.5)*\dir{>}
,(-7,-11)*{\scriptstyle{1}}
,(-7,11)*{\scriptstyle{1}}
,(5,-10);(5,10)**\dir{-}?>(.5)*\dir{<}
,(7,-11)*{\scriptstyle{1}}
,(7,11)*{\scriptstyle{1}}
\endxy}\{1\}\ar@{=}[dd]
\\ \\
\xy
(-5,-10);(5,-10)**\crv{(-5,-2)&(0,-2)&(5,-2)}
?>(1)*\dir{>},
(-6,-10)*{\scriptstyle{1}},(6,-10)*{\scriptstyle{1}}
,(-5,10);(5,10)**\crv{(-5,2)&(0,2)&(5,2)}
?>(0)*\dir{<},
(-6,10)*{\scriptstyle{1}},(6,10)*{\scriptstyle{1}}
\endxy
\oplus\,  [n-2]
{\xy
(-5,-10);(-5,10)**\dir{-}?>(.5)*\dir{>}
,(5,-10);(5,10)**\dir{-}?>(.5)*\dir{<}
,(-7,-10)*{\scriptstyle{1}}
,(7,-10)*{\scriptstyle{1}}
,(-7,10)*{\scriptstyle{1}}
,(7,10)*{\scriptstyle{1}}
\endxy}
\ar[rr]^-{\left(
\begin{smallmatrix}
0 & 1\\
0 &0 \\
\end{smallmatrix}\right)}
&&
([n-2]\oplus \mathbb{Q}\{n-1\}){\xy
(-5,-10);(-5,10)**\dir{-}?>(.5)*\dir{>}
,(-7,-11)*{\scriptstyle{1}}
,(-7,11)*{\scriptstyle{1}}
,(5,-10);(5,10)**\dir{-}?>(.5)*\dir{<}
,(7,-11)*{\scriptstyle{1}}
,(7,11)*{\scriptstyle{1}}
\endxy}
}
\end{equation}

\item 
\begin{equation}\tag{{moy$\eta$}}\label{moyeta}
\xymatrix{
\zorro \{-1\}\ar[r]^{\chi_0\otimes 1} \ar@/_2pc/[rd]_-{\tiny{\left(\begin{matrix}\chi_1&0\end{matrix}\right)}}
&\coso\ar[d]_-{\wr}^-\eta\\
& \esse\,\,\idoner\,\oplus\ti
}
\end{equation}

\end{enumerate}
\end{itemize}

\subsection{Formal complexes of MOY graphs}
The next step in our construction of the category $\mathbf{KR}$ is the definition of formal complexes of (formal direct sums of shifted) MOY-type graphs and of a set of equivalence relations on them.

\begin{definition}\label{eq:formal-complex}
A bounded formal complex of (formal direct sums of shifted) MOY-type graphs is a sequence
\[
\cdots \xrightarrow{A_{i-1}} M_i\xrightarrow{A_i}M_{i+1}\xrightarrow{A_{i+1}}\cdots
\]
with $i\in \mathbb{Z}$, where
\begin{itemize}
\item the $M_i$ are formal direct sums of shifthed MOY-type graphs, all with the same input and the same output sequences of $\uparrow^1$'s and $\downarrow^1$'s;
\item $A_i\colon M_i\to M_{i+1}$ is an element in the quotient $\widetilde{\mathcal{M}}$ of 
$\mathcal{M}$  
by the ideal generated by the relations in Section \ref{sec:relations};
\item one has $A_{i+1}\circ A_i=0$, for any $i\in \mathbb{Z}$; 
\item there exist two integers $i_{\min}$ and $i_{\max}$ such that $M_i=\mathbf{0}$ for any $i<i_{\min}$ ad any $i>i_{\max}$.
\end{itemize}
We say that the component $M_i$ of $M_{\bullet}$ is in degree $i$.\par
A morphism $\varphi_\bullet\colon M_\bullet\to N_\bullet$ of formal complexes of (formal direct sums of shifthed) MOY-type graphs is a sequence of morphisms $\varphi_i\colon M_i\to N_i$ in $\widetilde{\mathcal{M}}$ such that the diagram
\[
\xymatrix{
\cdots\ar[r]& M_i\ar[d]_{\varphi_i}\ar[r]^{A_i}&M_{i+1}\ar[r]\ar[d]^{\varphi_{i+1}}&\cdots\\
\cdots\ar[r]& N_i\ar[r]^{B_i}&N_{i+1}\ar[r]&\cdots
}
\]
commutes (i.e., $\varphi_{i+1}\circ A_i=B_i\circ\varphi_i$ in $\widetilde{\mathcal{M}}$, for any $i\in \mathbb{Z}$).
\end{definition}

\begin{remark}
The equation $A_{i+1}\circ A_i=0$ in Definition \ref{eq:formal-complex} makes sense as $\mathcal{M}$ modulo the ideal generated by the relations in Section \ref{sec:relations} is a $\mathbb{Q}$-vector space.
\end{remark}

\begin{remark}
As all the MOY-type graphs appearing in a formal complex have the same input and the same output sequences of $\uparrow^1$'s and $\downarrow^1$'s it is meaningful to talk of the source sequence and of the target sequence of a formal complex of  (formal direct sums of shifted) MOY-type graphs. 
\end{remark}

\begin{definition}
The \emph{tensor product} of two formal complexes of (formal direct sums of shifted) MOY-type graphs $M_\bullet$ and $N\bullet$ is the formal complex
\[
(M\otimes N)_\bullet := \mathrm{tot}\left(M_\bullet\otimes N_\bullet\right),
\]
where $M_\bullet\otimes N_\bullet$ is the formal bicomplex
\[
\xymatrix{
&\vdots&&\vdots\\
\cdots\ar[r]&M_i\otimes N_{j+1} \ar[u]\ar[rr]^{A_{i+1}\otimes 1_{N_{j+1}}} && M_{i+1}\otimes N_{j+1} \ar[u]\ar[r]&\cdots\\
\cdots\ar[r]&M_i\otimes N_j \ar[rr]^{A_i\otimes 1_{N_j}}\ar[u]^{1_{M_{i}}\otimes B_j}&& M_{i+1}\otimes N_j\ar[u]_{1_{M_{i+1}}\otimes B_j} \ar[r]&\cdots\\
&\vdots\ar[u]&&\vdots\ar[u]
}
\]
\end{definition}
If $M_\bullet$ and $N\bullet$ are two formal complexes of (formal direct sums of shifted) MOY-type graphs and the output sequence of $\uparrow^1$'s and $\downarrow^1$'s of $N_\bullet$ coincides with the input sequence of $M_\bullet$, then all the compositions $M_i\circ N_j$ are defined and one can give the following definition.
\begin{definition}
The \emph{composition} of two formal complexes of (formal direct sums of shifted) MOY-type graphs $M_\bullet$ and $N\bullet$ such that the output sequence of $\uparrow^1$'s and $\downarrow^1$'s of $N_\bullet$ coincides with the input sequence of $M_\bullet$ is the formal complex
\[
(M\circ N)_\bullet := \mathrm{tot}\left(M_\bullet\circ N_\bullet\right),
\]
where $M_\bullet \circ\ N_\bullet$ is the formal bicomplex
\[
\xymatrix{
&\vdots&&\vdots\\
\cdots\ar[r]&M_i\circ N_{j+1} \ar[u]\ar[rr]^{A_{i+1}\circ 1_{N_{j+1}}} && M_{i+1}\circ N_{j+1} \ar[u]\ar[r]&\cdots\\
\cdots\ar[r]&M_i\circ N_j \ar[rr]^{A_i\circ 1_{N_j}}\ar[u]^{1_{M_{i}}\circ B_j}&& M_{i+1}\circ N_j\ar[u]_{1_{M_{i+1}}\circ B_j} \ar[r]&\cdots\\
&\vdots\ar[u]&&\vdots\ar[u]
}
\]
\end{definition}

\vskip .7 cm
We introduce on (bounded) formal complexes a sum operation given by direct sum of complexes,
\[
M_\bullet\oplus N_\bullet= \left(
\cdots \xrightarrow{A_{i-1}\oplus B_{i-1}} M_i\oplus N_i\xrightarrow{A_i\oplus B_i}M_{i+1}\oplus N_{i+1}\xrightarrow{A_{i+1}\oplus B_{i+1}}\cdots\right)
\]
 as well as the following equivalence relations:
\begin{itemize}
\item (eq1) Isomorphic formal complexes are equivalent. That is, if we have a morphism of formal complexes
\[
\xymatrix{
\cdots\ar[r]& M_i\ar[d]_{\varphi_i}^{\wr}\ar[r]^{A_i}&M_{i+1}\ar[r]\ar[d]^{\varphi_{i+1}}_{\wr}&\cdots\\
\cdots\ar[r]& N_i\ar[r]^{B_i}&N_{i+1}\ar[r]&\cdots
}
\]
where all the $\varphi_i$'s are invertible (i.e., for each $i$ there exists an element $\psi_i\colon N_i\to M_i$ in $\widetilde{\mathcal{M}}$ such that $\varphi_i\circ\psi_i=1_{N_i}$ and $\psi_i\circ\varphi_i=1_{M_i}$ in $\widetilde{\mathcal{M}}$), then
\[
\cdots\to M_i\xrightarrow{A_i} M_{i+1}\to\cdots
\]
is equivalent to
\[
\cdots\to N_i\xrightarrow{B_i} N_{i+1}\to\cdots.
\]
\item (eq2)  A formal complex of the form
\[
\cdots \to M_{i-1}\xrightarrow{\left(
\begin{matrix}
\alpha_{i-1}\\A_{i-1}
\end{matrix}
\right)}
N\oplus M_i\xrightarrow{\left(
\begin{matrix}
1&\alpha_i\\0&A_i
\end{matrix}
\right)}
N\oplus M_{i+1}\xrightarrow{\left(
\begin{matrix}
0&A_{i+1}
\end{matrix}
\right)}
 M_{i+2}\to\cdots
\]
is equivalent to the formal complex
\[
\cdots\to  M_{i-1}\ \xrightarrow{A_{i-1}}
M_i\xrightarrow{A_i} M_{i+1}\xrightarrow{A_{i+1}} M_{i+2}
\to \cdots
\]
That is, we can ``simplify'' terms $\Gamma\xrightarrow{1}\Gamma$ if they appear as subcomplexes.

\item (eq3) A formal complex of the form
\[
\cdots \to M_{i-1}\xrightarrow{\left(
\begin{matrix}
A_{i-1}\\0
\end{matrix}
\right)}
 M_i \oplus N\xrightarrow{\left(
\begin{matrix}
A_i&\alpha_i\\0&1
\end{matrix}
\right)}
M_{i+1}\oplus N\xrightarrow{\left(
\begin{matrix}
A_{i+1}&\alpha_{i+1}
\end{matrix}
\right)}
 M_{i+2}\to\cdots
\]
is equivalent
to the formal complex
\[
\cdots\to  M_{i-1}\ \xrightarrow{A_{i-1}}
M_i\xrightarrow{A_i} M_{i+1}\xrightarrow{A_{i+1}} M_{i+2}
\to \cdots
\]
That is, we can ``simplify'' terms $\Gamma\xrightarrow{1}\Gamma$ if they appear as  quotient complexes.
\end{itemize}
\begin{notation}
We denote by $\mathbf{FC_{MOY}}$ the set of equivalence classes of formal complexes of (formal direct sums of shifted) MOY-type graphs, with respect to the equivalence relation generated by the equivalence relations (eq1)-(eq3) above. The equivalence class of the formal complex $M_\bullet$ will be denoted by the symbol $[M_\bullet]$.
\end{notation}
By the usual properties of direct sums and tensor product of complexes and totalization functors one immediately sees that the tensor product, the composition and the direct sum operations are compatible with the equivalence relations (eq1)-(eq3) and so induce corresponding operations on $\mathbf{FC_{MOY}}$. More formally, this is stated as follows.
\begin{proposition}\label{prop:operations}
The operations
\[
+,\otimes,\circ \colon  \mathbf{FC_{MOY}}\times \mathbf{FC_{MOY}} \to \mathbf{FC_{MOY}}
\]
defined by
\begin{align*}
[M_\bullet]+[N_\bullet]&=[M_\bullet\oplus N_\bullet]\\
[M_\bullet]\otimes[N_\bullet]&=[M_\bullet\otimes N_\bullet]\\
[M_\bullet]\circ[N_\bullet]&=[M_\bullet\circ N_\bullet]
\end{align*}
are well defined and satisfy all the expected associativity, commutativity and distributivity conditions. In particular $(\mathbf{FC_{MOY}},+,\otimes)$ is an abelian semiring.
\end{proposition}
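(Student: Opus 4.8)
The plan is to isolate the single genuinely nontrivial point --- that each of the three operations descends to the quotient by the equivalence relation generated by (eq1)--(eq3) --- and to obtain the algebraic axioms essentially for free from canonical isomorphisms of (bi- and tri-)complexes. First I would record that, already at the level of \emph{complexes} (before passing to equivalence classes), direct sum, totalized tensor product and totalized composition are associative, that $\oplus$ is commutative, and that $\otimes$ and $\circ$ distribute over $\oplus$: these are the standard isomorphisms of bicomplexes and triple complexes, combined with the distributivity of juxtaposition and concatenation of MOY-type graphs over formal direct sums and with the associativity (Fubini) property of the totalization functor $\mathrm{tot}$. The additive unit is the class $[\mathbf 0]$ of the zero complex, the $\otimes$-unit is the class $[\emptyset]$ of the empty diagram in degree $0$, and $[\mathbf 0]$ is absorbing for $\otimes$; all of these are equalities or \emph{canonical isomorphisms} of complexes, so once well-definedness is in place they descend through (eq1) to the asserted semiring axioms, the abelian structure being that of the additive operation $+$.

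The heart of the matter is therefore well-definedness: if $M_\bullet\sim M'_\bullet$ then $M_\bullet\,\Box\,P_\bullet\sim M'_\bullet\,\Box\,P_\bullet$ for each operation $\Box\in\{\oplus,\otimes,\circ\}$ and each fixed $P_\bullet$ (and symmetrically in the second variable, by the commutativity/associativity already noted at complex level). Since $\sim$ is generated by (eq1)--(eq3), it suffices to treat a single application of one generating relation, the general case following by transitivity and bilinearity. For (eq1) I would observe that each operation is obtained by applying the corresponding graph-level operation degreewise and then totalizing; both steps are functorial and $\mathbb{Q}$-linear, so a degreewise isomorphism $\varphi_\bullet$ of complexes induces an isomorphism of the associated bicomplexes, whence --- totalization being a functor --- an isomorphism of the totalizations, i.e. an (eq1) equivalence.

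The remaining, and main, step is compatibility of the Gaussian-elimination moves (eq2) and (eq3) with the operations. For $\oplus$ this is immediate, since the cancelable block $N\xrightarrow{1}N$ survives verbatim as a direct summand. For $\Box\in\{\otimes,\circ\}$ the key lemma is that Gaussian elimination commutes with totalizing against a fixed $P_\bullet$: tensoring (respectively concatenating) the block $N\xrightarrow{1}N$ with each term $P_j$ produces, in the totalized differential, identity blocks $(N\,\Box\,P_j)\xrightarrow{\pm 1}(N\,\Box\,P_j)$ between consecutive total degrees, while the vertical differentials $1\,\Box\,B_j$ become precisely the off-diagonal ``$\alpha$'' entries appearing in the templates of (eq2)/(eq3). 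After reorganizing the summands inside each total degree by an (eq1) isomorphism so that each identity block sits in the standard position, one eliminates these blocks one total degree at a time, and the complex that survives is exactly $\mathrm{tot}(M'_\bullet\,\Box\,P_\bullet)$.

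I expect the bookkeeping of signs, of the shifts $\{m\}$, and of the reordering of direct summands to be the only real obstacle; conceptually, this step is the classical fact that the relation generated by (eq1)--(eq3) is nothing but chain-homotopy equivalence of bounded complexes over the additive $\mathbb{Q}$-linear category whose objects are formal sums of shifted MOY graphs and whose morphisms are $\widetilde{\mathcal M}$. Indeed (eq1) together with (eq2) already generate arbitrary Gaussian eliminations: given a differential block $\left(\begin{smallmatrix}1&c\\ d&e\end{smallmatrix}\right)$, the invertible triangular chain map $\left(\begin{smallmatrix}1&0\\ -d&1\end{smallmatrix}\right)$ is an (eq1) equivalence reducing it to $\left(\begin{smallmatrix}1&c\\ 0&e-dc\end{smallmatrix}\right)$, to which (eq2) applies. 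Once $\sim$ is identified with homotopy equivalence, well-definedness is automatic, since $\oplus$, $\mathrm{tot}(-\otimes P_\bullet)$ and $\mathrm{tot}(-\circ P_\bullet)$ are additive functors and send chain homotopies (via $h\mapsto h\,\Box\,1_{P_\bullet}$, totalized) to chain homotopies. Either the direct or the homotopy-theoretic formulation yields well-definedness, and combining it with the first paragraph gives the proposition.
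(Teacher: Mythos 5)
Your proposal is correct, and it is worth saying up front that the paper itself offers essentially no proof of this proposition: it is preceded by a single sentence asserting that ``by the usual properties of direct sums and tensor product of complexes and totalization functors one immediately sees'' the compatibility with (eq1)--(eq3), and no proof environment follows. So your write-up is not an alternative route so much as the missing content. Your decomposition is the right one: the algebraic axioms come for free from canonical isomorphisms at the level of complexes together with (eq1), compatibility with (eq1) is functoriality of degreewise operations and of $\mathrm{tot}$, and the only substantive point is your key lemma that cancellation of a contractible block $N\xrightarrow{1}N$ commutes with $\mathrm{tot}(-\otimes P_\bullet)$ and $\mathrm{tot}(-\circ P_\bullet)$. (Note also that ``abelian'' in the statement can only refer to the additive structure, as you implicitly assume: juxtaposition of planar diagrams is not commutative.)

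Two small caveats. First, in your elimination step the source copy of $N\,\Box\,P_b$ in total degree $i+b$ carries, besides the identity onto its target copy, the component $\pm 1\,\Box\,B_b$ landing on the source copy of $N\,\Box\,P_{b+1}$, which lives in the \emph{same} total degree $i+b+1$ as the target copy; so a single naive application of (eq2) does not match the template. This is repaired either by eliminating the pairs in decreasing order of $b$ (so that the offending summand has already been deleted when you reach $b$) or by a preliminary triangular (eq1) change of basis splitting off $\mathrm{tot}\bigl((N\xrightarrow{1}N)\,\Box\,P_\bullet\bigr)$ as a direct sum of elementary contractibles --- your phrase ``after reorganizing the summands'' should be understood in one of these two ways, since reordering alone does not remove the extra component. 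Second, your closing identification of the relation generated by (eq1)--(eq3) with chain-homotopy equivalence is safe only in the direction you do not need: that homotopy equivalent bounded complexes are related by (eq1)--(eq3) requires splitting contractible complexes into elementary ones (an idempotent-completeness issue), and without it the ``homotopy-theoretic formulation'' of well-definedness is circular. Your direct argument does not rely on this and stands on its own.
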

\begin{remark}
The operation $([M_\bullet],[N_\bullet])\mapsto [M_\bullet]\circ[N_\bullet]$ in the above proposition is defined when the common target sequence of $\uparrow^1$'s and $\downarrow^1$'s of the MOY-type graphs appearing in $N_\bullet$ coincides with the source sequence MOY-type graphs appearing in $M_\bullet$. These sequences are well defined, i.e., they do not depend on the particular representatives $M_\bullet$ and $N_\bullet$ chosen for the equivalence classes $[M_\bullet]$ and $[N_\bullet]$.
\end{remark}
\begin{remark}
The abelian semigroup $(\mathbf{FC_{MOY}},+)$ carries a natural action of the semiring $\mathbb{Z}_{\geq 0}[q,q^{-1}]$ of Laurent polynomials in the variable $q$ with coefficients in the abelian semiring $\mathbb{Z}_{\geq 0}$ of nonnegative integers given by
\[
q^i\cdot [M_\bullet]=[M_\bullet\{i\}],
\]
extended by additivity. This makes $(\mathbf{FC_{MOY}},+)$ a \emph{semimodule} for the semiring $\mathbb{Z}_{\geq 0}[q,q^{-1}]$. Notice that the $\mathbb{Z}_{\geq 0}[q,q^{-1}]$-action preserves both the source and the target sequences of elements in  $\mathbf{FC_{MOY}}$.
\end{remark}

\subsection{The category $\mathbf{KR}$}
We are finally in position to define the category $\mathbf{KR}$ announced at the beginning of this section.
\begin{notation}
For any two sequences $\vec{i}$ and $\vec{j}$ of $\uparrow^1$'s and $\downarrow^1$'s, we write $\mathbf{FC_{MOY}}(\vec{i},\vec{j})$ for the subset of $\mathbf{FC_{MOY}}$ consisting of all equivalence classes of formal complexes of (formal direct sums of shifted) MOY-type graphs with input sequence $\vec{i}$ and output sequence $\vec{j}$.
\end{notation}

\begin{definition}
The monoidal category $\mathbf{KR}$  is defined as follows:
\begin{itemize}
\item objects in $\mathbf{KR}$ are finite sequences of $\uparrow^1$'s and $\downarrow^1$'s;
\item the set $\mathbf{KR}(\vec{i},\vec{j})$ of morphisms between two objects $\vec{i}$ and $\vec{j}$ is the $\mathbb{Z}_{\geq 0}[q,q^{-1}]$-semimodule $\mathbf{FC_{MOY}}(\vec{i},\vec{j})$.
\end{itemize}
Composition of morphisms is given by the composition
\[
\mathbf{FC_{MOY}}(\vec{j},\vec{k})\times \mathbf{FC_{MOY}}(\vec{i},\vec{j})\to \mathbf{FC_{MOY}}(\vec{i},\vec{k})
\]
of Proposition \ref{prop:operations}.\par
The identity morphism is given by the (equivalence class of the) formal complex consisting of the MOY-type diagram with just straight vertical lines connecting the bottom boundary with the top boundary in degree zero, and $\mathbf{0}$ in all the others degrees.\par
The tensor product of objects is given by concatenation. Tensor product of morphisms is given by the tensor product
\[
\mathbf{FC_{MOY}}(\vec{i}_1,\vec{j}_1)\times \mathbf{FC_{MOY}}(\vec{j}_1,\vec{j}_2)\to \mathbf{FC_{MOY}}((\vec{i}_1\otimes \vec{i}_2,\vec{j}_1\otimes \vec{j}_2)
\]
of Proposition \ref{prop:operations}. The unit object for the tensor product is the empty sequence.
\end{definition}
\begin{example}
The identity of $\uparrow^1\,\, \downarrow^1 \,\,\uparrow^1\,\, \uparrow^1$ is
\[
\left[
\cdots\to \mathbf{0}\to\mathbf{0}\to
\xy
(-6,-5);(-6,5)**\dir{-}
?>(1)*\dir{>},
(-2,-5);(-2,5)**\dir{-}
?>(0)*\dir{<},
(2,-5);(2,5)**\dir{-}
?>(1)*\dir{>},
(6,-5);(6,5)**\dir{-}
?>(1)*\dir{>},
(-4.7,-5)*{\scriptstyle{1}},
(-.7,-5)*{\scriptstyle{1}},
(3.3,-5)*{\scriptstyle{1}},
(7.3,-5)*{\scriptstyle{1}}
\endxy\,
\to \mathbf{0}\to\mathbf{0}\to\cdots\right]
\]
with $\xy
(-6,-5);(-6,5)**\dir{-}
?>(1)*\dir{>},
(-2,-5);(-2,5)**\dir{-}
?>(0)*\dir{<},
(2,-5);(2,5)**\dir{-}
?>(1)*\dir{>},
(6,-5);(6,5)**\dir{-}
?>(1)*\dir{>},
(-4.7,-5)*{\scriptstyle{1}},
(-.7,-5)*{\scriptstyle{1}},
(3.3,-5)*{\scriptstyle{1}},
(7.3,-5)*{\scriptstyle{1}}
\endxy$ in degree zero.
\end{example}
Next, we introduce on $\mathbf{KR}$ the structure of rigid category. As $\mathbf{KR}$ is monoidally generated by $\uparrow^1$ and $\downarrow^1$ it will suffice defining evaluation and coevaluation morphisms for these.
\begin{proposition}
The evaluation and coevaluation morphisms
\[
\mathrm{ev}_{\uparrow^1}=\left[
\cdots\to \mathbf{0}\to\mathbf{0}\to
\evop
\to \mathbf{0}\to\mathbf{0}\to\cdots\right]
\]
\[
\mathrm{ev}_{\downarrow^1}=\left[
\cdots\to \mathbf{0}\to\mathbf{0}\to
\ev
\to \mathbf{0}\to\mathbf{0}\to\cdots\right]
\]
\[
\mathrm{coev}_{\uparrow^1}=\left[
\cdots\to \mathbf{0}\to\mathbf{0}\to
\coev
\to \mathbf{0}\to\mathbf{0}\to\cdots\right]
\]
\[
\mathrm{coev}_{\downarrow^1}=\left[
\cdots\to \mathbf{0}\to\mathbf{0}\to
\coevop
\to \mathbf{0}\to\mathbf{0}\to\cdots\right],
\]
where the nonzero graph is in degree zero, endow $\mathbf{KR}$ with the structure of balanced rigid category and exhibit $\uparrow^1$ as the (left and right) dual of $\downarrow^1$ and vive versa.  
\end{proposition}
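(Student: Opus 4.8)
The plan is to verify directly the defining axioms of a rigid category for the four proposed morphisms, namely the snake (zigzag) identities, and to check that they hold simultaneously for both orientations so as to exhibit $\uparrow^1$ and $\downarrow^1$ as mutual left and right duals. The decisive simplification is that each of the (co)evaluation morphisms is, by definition, a formal complex concentrated in homological degree zero whose unique nonzero term is a single MOY-type graph (a cup or a cap). Consequently, when one forms the tensor products and compositions appearing in the snake identities, the totalization of the resulting formal bicomplexes is again concentrated in degree zero, with unique term the MOY-type graph obtained by gluing the relevant cups and caps. The entire verification thus reduces to an equality between single MOY-type graphs, with no differentials and no homological bookkeeping to track.

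First I would spell out one snake identity, say the composite $(\mathrm{ev}\otimes 1)\circ(1\otimes \mathrm{coev})$ in the appropriate orientation, and read off its unique degree-zero term: gluing the cap \ev to the cup \coev along the middle strand produces exactly the zigzag graph \zorro. Since MOY-type graphs are taken up to planar isotopy, this zigzag is the \emph{same} graph as the straight vertical strand $\mathrm{id}_1$. Hence the degree-zero complex computing the composite is literally the identity complex, and by the equivalence relation (eq1) the two represent the same class in $\mathbf{FC_{MOY}}$; therefore the composite equals the identity morphism in $\mathbf{KR}$. I would then repeat this for the mirror-image zigzag and for the two orientation-reversed versions built from \evop and \coevop, obtaining all four snake identities and hence both the left- and the right-duality data for the pair $(\uparrow^1,\downarrow^1)$. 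Because $\mathbf{KR}$ is monoidally generated by $\uparrow^1$ and $\downarrow^1$, establishing the duality for these two generators propagates to all objects by the general formalism of monoidal categories.

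For the balanced/two-sided structure, I would observe that the left and right evaluation (and coevaluation) morphisms for a given generator are carried into one another by a planar rotation of the cup or cap, so that the canonical comparison morphism between the left dual and the right dual of $\uparrow^1$ is again represented, term by term, by a planar isotopy of a single MOY-type graph back to $\mathrm{id}_1$. This makes the two duality structures coherent, which is precisely the balancing compatibility being claimed; once more it holds at the level of a single graph by planar-isotopy invariance, with no appeal to the relations of Section \ref{sec:relations}.

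The only genuine subtlety, and where I would be most careful, is the bookkeeping in the reduction step: one must confirm that forming $\mathrm{tot}$ of the composition/tensor bicomplexes of degree-zero complexes really does yield a degree-zero complex with the naively glued graph as its single term, so that no stray shift $\{m\}$ or sign is introduced, and that the orientations and edge-labels of the glued cups and caps match those of $\mathrm{id}_1$ under the isotopy. There is no hard computation here—everything collapses to the planar-isotopy invariance of MOY-type graphs—but the orientation conventions (which of the four morphisms witnesses ``$\uparrow^1$ is a \emph{left} dual'' versus ``a \emph{right} dual'' of $\downarrow^1$) must be tracked consistently to be sure that all four zigzags straighten correctly.
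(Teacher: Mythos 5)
Your proposal is correct and follows essentially the same route as the paper: the paper's proof simply notes that since the (co)evaluation complexes are concentrated in degree zero, the verification of the Zorro (snake) identities is immediate, which is exactly the reduction you carry out in more detail via planar isotopy of the glued zigzag graphs.
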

\begin{proof}
As the formal complexes defining the evaluation and coevaluation morphisms in $\mathbf{KR}$ are concentarted in degree zero, the verification of the Zorro moves is immediate. 
\end{proof}

\subsection{The braidings in $\mathbf{KR}$} 
Abstracting from \cite{Khovanov-Rozansky} we now define two morphisms
\[
\sigma^+,\sigma^-\colon \uparrow^1\,\,\uparrow^1\,\, \to\,\, \uparrow^1\,\,\uparrow^1
\]
in $\mathbf{KR}$ which satisfy Reidemeister moves invariance. As $\mathbf{KR}$ is monoidally generated by $\uparrow^1$ and its dual $\downarrow^1$, this will endow $\mathbf{KR}$ with the structure of untwisted braided category. This is the content of the following
\begin{proposition}
The morphisms
\[
\sigma^+=\left[
\cdots\to \mathbf{0}\to\mathbf{0}\to
\idtwob\{n-1\}\xrightarrow{\chi_0} \esseb\{n\}
\to \mathbf{0}\to\mathbf{0}\to\cdots\right]
\]
with $\idtwob\{n-1\}$ in degree zero, and
\[
\sigma^-=\left[
\cdots\to \mathbf{0}\to\mathbf{0}\to
\esseb\{-n\}\xrightarrow{\chi_1} \idtwob\{1-n\}
\to \mathbf{0}\to\mathbf{0}\to\cdots\right]
\]
with $\idtwob\{1-n\}$ in degree zero, endow $\mathbf{KR}$ with the structure of untwisted braided category, and so there is a distinguished braided monoidal functor
\[
Kh_n\colon \mathbf{TD}\to \mathbf{KR}
\]
from the category $\mathbf{TD}$ of tangle diagrams, mapping the object $\uparrow$ of $\mathbf{TD}$ to the object $\uparrow^1$ of $\mathbf{KR}$.
\end{proposition}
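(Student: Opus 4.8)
The plan is to use the standard presentation of the category $\mathbf{TD}$ of oriented tangle diagrams: as a monoidal category it is generated by the positive and negative crossings together with the cups and caps, subject to the planar-isotopy (zig-zag) relations and the oriented Reidemeister moves R1, R2 and R3. With this presentation in hand, producing the braided monoidal functor $Kh_n$ and simultaneously exhibiting $\sigma^+,\sigma^-$ as the braiding reduces to a single task: define $Kh_n$ on generators --- sending the cups and caps to the evaluation and coevaluation of the previous proposition and the crossings to $\sigma^+$ and $\sigma^-$ --- and then verify that every defining relation of $\mathbf{TD}$ holds in $\mathbf{KR}$, i.e.\ becomes an equality of equivalence classes in $\mathbf{FC_{MOY}}$. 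The zig-zag relations require nothing new, since the cups and caps are complexes concentrated in degree zero and the relevant Zorro moves were already checked in the previous proposition.

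Next I would treat the parallel-strand R2 move, i.e.\ invertibility of $\sigma^+$ with inverse $\sigma^-$. Forming the total complex of $\sigma^+\circ\sigma^-$, the middle degree acquires a summand $S\circ S$ flanked by copies of $\mathrm{id}_2$ and $S$. Applying the isomorphism $\varphi$ --- computing the incoming differential by \eqref{eq:moy2a} and \eqref{eq:moy2b}, and the $\chi_1$-component by \eqref{eq:moy2d} --- puts the differential in triangular form in which an acyclic subcomplex $\mathrm{id}_2\xrightarrow{1}\mathrm{id}_2$ is visible; the Gaussian-elimination equivalences (eq2) and (eq3) then delete it, leaving the identity complex. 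The computation for $\sigma^-\circ\sigma^+$ is the mirror case. The mixed-orientation R2 moves, where the braiding interacts with a cup or a cap, follow by rigidity: one takes the mates of $\sigma^\pm$ and uses the compatibility relations \eqref{eq:moynu1} and \eqref{eq:moynu2}.

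The heart of the argument is the R3 / Yang--Baxter relation $(\sigma^+\otimes 1)(1\otimes\sigma^+)(\sigma^+\otimes 1)=(1\otimes\sigma^+)(\sigma^+\otimes 1)(1\otimes\sigma^+)$. I would expand both sides as total complexes of threefold compositions on three strands; their terms are built from $\mathrm{id}_3$, the two single insertions of $S$, the composite $(S\otimes 1)\circ(1\otimes S)$, and the triple product $(S\otimes 1)\circ(1\otimes S)\circ(S\otimes 1)$. The crucial input is the isomorphism $\eta$ of \eqref{moyeta}, which splits this triple product as $(S\otimes 1)\oplus T$, together with \eqref{eq:moyz}, which forces $(1_S\otimes 1)\circ(1\otimes\chi_0)=0$ and thereby shapes the off-diagonal differentials so that the acyclic pieces can be removed by (eq2) and (eq3). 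After these cancellations both sides reduce to the same complex, symmetric in the $T$-term, giving the equality. I expect this to be the main obstacle, precisely because it is a three-strand bookkeeping problem in which one must track all the shifts and Koszul signs while simultaneously invoking \eqref{moyeta} and \eqref{eq:moyz}.

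Finally, untwistedness is the R1 move: I would close off one strand of $\sigma^+$ with a cup and a cap and show the resulting curl equals $\mathrm{id}_{\uparrow^1}$. This is exactly where the chosen shifts $\{n-1\},\{n\}$ (and their negatives in $\sigma^-$) are needed: combining the isomorphisms $\lambda$ and $\mu$ with relations \eqref{eq:moyepsilon}, \eqref{eq:moychi0} and \eqref{eq:moychi1} collapses the curl complex, and the shifts are tuned so that no residual grading shift survives --- which is the content of the braiding being untwisted rather than merely balanced. Once naturality of the braiding has been checked against the remaining elementary morphisms (again via \eqref{eq:moynu1}, \eqref{eq:moynu2}, \eqref{eq:moychi0} and \eqref{eq:moychi1}), the hexagon identities are automatic: the braiding on an arbitrary tensor product is \emph{defined} as the composite prescribed by the hexagons, and its consistency is exactly the Yang--Baxter relation already established. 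The existence of $Kh_n$ as a braided monoidal functor with $Kh_n(\uparrow)=\uparrow^1$ then follows immediately from the presentation of $\mathbf{TD}$.
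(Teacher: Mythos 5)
Your proposal is correct and follows essentially the same route as the paper, which omits the details and simply observes that the relations of Section 2.5 were engineered so that the Khovanov--Rozansky proof of Reidemeister invariance (detailed in the cited thesis) carries over verbatim. Your outline correctly reconstructs that argument, matching each relation to its role: \eqref{eq:moy2a}, \eqref{eq:moy2b}, \eqref{eq:moy2d} and Gaussian elimination for parallel R2, \eqref{eq:moynu1}--\eqref{eq:moynu2} for mixed R2, \eqref{moyeta} and \eqref{eq:moyz} for R3, and \eqref{eq:moyepsilon}, \eqref{eq:moychi0}, \eqref{eq:moychi1} with the shift normalization $\{n-1\},\{n\}$ for R1/untwistedness.
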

\begin{proof}
Since the relations on morphisms we have imposed encode in particular the properties of complexes of matrix factorizations used by Khovanov and Rozansky in their proof of the invariance of level $n$ Khovanov homology by Reidemeister moves,
the proof of the Proposition is verbatim adapted from the original proof in \cite{Khovanov-Rozansky}, so we omit it. A detailed derivation can be found in \cite{Omid}.
\end{proof}

As explained in the Introduction, the functor $Kh_n$ determines the Khovanov-Rozansky polynomial for a link $\Gamma$ such that $Kh_n(\Gamma)=[V_\bullet]\otimes \emptyset$
where   
\[
V_\bullet=\left(\cdots\to V_{i-1}\xrightarrow{0} V_i\xrightarrow{0} V_{i+1}\cdots \right),
\]
is some complex of graded $\mathbb{Q}$-vector spaces with trivial differentials. This fact will be used in the following Section to compute the Khovanov-Rozansky polynomials of 2-strand braid links.

\section{The $Kh_n$ invariant of 2-strand braid links}\label{chapter:computation}
We conclude this article by computing the Khovanov homology of the 2-strand braid link with $k$ crosssings, i.e., of the link
\[
\trsplusk{k}
\]
This is connected, i.e., it is a knot, for odd $k$, and has exactly two components for $k$ even. The formula we are going to find for the Poincar\'e polynomial of the Khovanov homology of the 2-strand braid with $k$ crossings, in the odd case will extend to $n\leq 4$ the results by Rasmussen \cite{Rasmussen}, and will agree  with the prediction by Morozov and Anokhina \cite{Morozov1}. For an even number of crossings, the formula we will find will agrees with the prediction by Morozov and Anokhina \cite{Morozov1}, and with the string theory inspired prediction by Gukov, Iqbal, Koz\c{c}az and Vafa for the particular case of two crossings (the Hopf link) \cite{Gukov}. For $n=2$ and arbitrary $k$, the formula agrees with results by Beliakova--Putyra--Wehrli and Nizami--Munir--Sohail--Usman  \cite{bpw19,nmsu16}. Finally, for low values of $n$ and $k$, these formulas has been computer checked by Carqueville and Murfet in \cite{Carqueville-Murfet}. More precisely, the case of 2 crossings has been computer checked for $n\leq 11$,  the case of 3 crossings has been computer checked for $n\leq 11$, the case of 4 crossings for $n\leq 5$, the case of 5 crossings for $n\leq 6$, and the case of 6 crossings for $n\leq4$. 

\subsection{Preliminary lemmas}
Here we provide a few preliminary results to be used in the computation of the Khovanov homology of the 2-strand braid links. In what follows, ``complex'' (resp.  ``bicomplex'') always mean ``formal complex  (resp., bicomplex) of formal sums of shifted MOY-type graphs''. 

\begin{lemma}\label{lemma:one}
The bicomplex
\[
\xymatrix{
\idtwob\ar[r]^{\chi_0}\ar[d]_{\chi_0}&\esseb\{1\}\ar[d]^{\chi_0\circ 1}\\
\esseb\{1\}\ar[r]^-{1\circ\chi_0}&\essecircesseb\{2\}
}
\]
is equivalent to the bicomplex
\[
\xymatrix{
\idtwob\ar[r]^{\chi_0}\ar[d]_{\chi_0}& \esseb\{1\}\ar[d]^{\tiny{\left(\begin{matrix}1\\ \alpha\end{matrix}\right)}}\\
\esseb\{1\}\ar[r]^-{\tiny{\left(\begin{matrix}1\\ 0\end{matrix}\right)}}&\esseb\{1\}\oplus \esseb\{3\}
}
\]
\end{lemma}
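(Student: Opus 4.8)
The plan is to exhibit the second bicomplex as the image of the first under an explicit isomorphism of bicomplexes, and then to invoke the equivalence relation (eq1). The two bicomplexes agree in three of their four corners; they differ only in the lower-right entry, which is $\essecircesseb\{2\}=(S\circ S)\{2\}$ in the first and $\esseb\{1\}\oplus\esseb\{3\}=S\{1\}\oplus S\{3\}$ in the second. Since $\varphi\colon S\circ S\xrightarrow{\sim}S\{-1\}\oplus S\{1\}$ is one of the distinguished invertible elementary morphisms, its shift $\varphi\{2\}\colon (S\circ S)\{2\}\xrightarrow{\sim}S\{1\}\oplus S\{3\}$ is an isomorphism between precisely these two objects, so it is the natural candidate for the nontrivial component of the desired isomorphism.

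Concretely, I would define a morphism of bicomplexes $\Phi$ that is the identity on $\idtwob$, on the upper-right $\esseb\{1\}$ and on the lower-left $\esseb\{1\}$, and that equals $\varphi\{2\}$ on the lower-right corner. The two edges meeting only identities, namely the top and left maps (both equal to $\chi_0$), are preserved automatically, so compatibility there is trivial. The content lies in the other two edges. For the right vertical edge I would compose the original map $\chi_0\circ 1\colon \esseb\{1\}\to(S\circ S)\{2\}$ with $\varphi\{2\}$ and read off, from \eqref{eq:moy2b} shifted by $\{2\}$, that $\varphi\{2\}\circ(\chi_0\circ 1)=\left(\begin{smallmatrix}1\\\alpha\end{smallmatrix}\right)$, which is exactly the right vertical map of the target. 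For the bottom horizontal edge I would similarly compose $1\circ\chi_0$ with $\varphi\{2\}$ and use \eqref{eq:moy2a} shifted by $\{2\}$ to get $\varphi\{2\}\circ(1\circ\chi_0)=\left(\begin{smallmatrix}1\\0\end{smallmatrix}\right)$, the bottom horizontal map of the target. This shows that $\Phi$ commutes with both differentials and is invertible, hence is an isomorphism of bicomplexes; passing to totalizations, the two complexes are isomorphic and therefore equivalent by (eq1).

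The only point requiring care is the shift bookkeeping: relations \eqref{eq:moy2a} and \eqref{eq:moy2b} are stated with source $\esseb\{-1\}$ and target $\esseb\{-1\}\oplus\esseb\{1\}$, so one must check that translating them by $\{2\}$ yields maps out of $\esseb\{1\}$ into $\esseb\{1\}\oplus\esseb\{3\}$, matching the shifts in the statement exactly. Beyond this, there is no real obstacle: the commutativity of the original square never needs to be verified independently, since it is transported to the target square through the isomorphism $\Phi$, and the whole argument reduces to recognizing $\varphi$ as the relevant invertible generator and applying the two \emph{moy}-relations column by column.
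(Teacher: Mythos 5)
Your proposal is correct and follows the paper's own argument exactly: the paper likewise takes the identity on three corners and $\varphi$ (implicitly shifted by $\{2\}$) on the lower-right corner, with the two triangles commuting by \eqref{eq:moy2a} and \eqref{eq:moy2b}, and concludes by (eq1). Your matching of \eqref{eq:moy2b} to the right vertical edge and \eqref{eq:moy2a} to the bottom horizontal edge, as well as the shift bookkeeping, is accurate.
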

\begin{proof}
In the diagram
\[
\xymatrix{
\idtwob\ar[d]_{\chi_0}\ar[r]^{\chi_0}&\esseb\{1\}\ar[d]^{\chi_0\circ 1}\ar@/^1pc/[rrdd]^{\tiny{\left(\begin{matrix}1\\ \alpha\end{matrix}\right)}}\\
\esseb\{1\}\ar[r]^-{1\circ\chi_0}\ar@/_3pc/[rrrd]_{\tiny{\left(\begin{matrix}1\\ 0\end{matrix}\right)}}&\essecircesseb\{2\}\ar[drr]^{\varphi}_{\sim}\\
&&&\esseb\{1\} \oplus \esseb\{3\}
}
\]
the morphism $\varphi$ is an isomorphism and all the subdiagrams commute, due to relations (\ref{eq:moy2a}) and (\ref{eq:moy2b}).
\end{proof}

\begin{lemma}\label{lemma:two}
The bicomplex
\[
\xymatrix{
\esseb\ar[r]^{\alpha}\ar[d]_{\chi_0\circ 1}& \esseb\{2\}\ar[d]^{\chi_0\circ 1}\\
\essecircesseb\{1\}\ar[r]_-{1\circ \alpha}&\essecircesseb\{3\}
}
\]
is equivalent to the bicomplex
\[
\xymatrix{
\esseb\ar[r]^{\alpha}\ar[d]_{\tiny{\left(\begin{matrix}1\\ \alpha\end{matrix}\right)}}& \esseb\{2\}\ar[d]^{\tiny{\left(\begin{matrix}1\\ \gamma\end{matrix}\right)}}\\
\esseb\oplus \esseb\{2\}\ar[r]_-{\tiny{\left(\begin{matrix}0  & 1\\ 0 & 0\end{matrix}\right)}}&\esseb\{2\}\oplus \esseb\{4\}
}
\]
\end{lemma}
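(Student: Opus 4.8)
The plan is to produce an explicit isomorphism of bicomplexes from the first square onto the second, in direct analogy with Lemma~\ref{lemma:one}, but now employing \emph{both} of the distinguished isomorphisms $\varphi$ and $\psi$. First I would leave the top row $\esseb\xrightarrow{\alpha}\esseb\{2\}$ untouched (the identity on each top corner), and on the two bottom corners I would apply the shifted isomorphisms
\[
\varphi\{1\}\colon \essecircesseb\{1\}\xrightarrow{\sim}\esseb\oplus\esseb\{2\},\qquad \psi\{3\}\colon \essecircesseb\{3\}\xrightarrow{\sim}\esseb\{2\}\oplus\esseb\{4\}.
\]
Since $\varphi$ and $\psi$ are invertible, so are their shifts, and hence the quadruple $(1,1,\varphi\{1\},\psi\{3\})$ will be an isomorphism of bicomplexes as soon as it intertwines the two squares; the equivalence then follows from the relation (eq1).

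The second step is to check the four edges of the transformed square, each of which reduces to one of the imposed relations read in the appropriate shift. The top edge is unchanged. The left vertical edge is $\varphi\{1\}\circ(\chi_0\circ 1)=\left(\begin{smallmatrix}1\\ \alpha\end{smallmatrix}\right)$, which is \eqref{eq:moy2b} shifted by $\{1\}$. The right vertical edge is $\psi\{3\}\circ(\chi_0\circ 1)=\left(\begin{smallmatrix}1\\ \gamma\end{smallmatrix}\right)$, which is \eqref{eq:moy2c} shifted by $\{3\}$. Finally the bottom edge requires $\psi\{3\}\circ(1\circ\alpha)=\left(\begin{smallmatrix}0&1\\ 0&0\end{smallmatrix}\right)\circ\varphi\{1\}$, which is exactly \eqref{eq:shift} shifted by $\{1\}$; conjugating by the isomorphism $\varphi\{1\}$ turns the bottom map into the strictly upper-triangular map $\left(\begin{smallmatrix}0&1\\ 0&0\end{smallmatrix}\right)$ of the target square.

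The only delicate point is the bookkeeping of degree shifts, so that the source and target of each shifted relation match the corresponding corner of the square; in particular it is crucial to use $\psi$ (rather than $\varphi$) on the bottom-right corner, since it is precisely \eqref{eq:moy2c} and \eqref{eq:shift} --- and not \eqref{eq:moy2b} and \eqref{eq:shift2} --- that produce the map $\gamma$ on the right edge together with the strictly upper-triangular map along the bottom. I do not anticipate any genuine obstacle beyond this: once the two isomorphisms $\varphi\{1\}$ and $\psi\{3\}$ are fixed, every edge of the transformed square is literally one of the relations of Section~\ref{sec:relations}, and the conclusion is immediate from (eq1).
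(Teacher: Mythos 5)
Your proof is correct and follows essentially the same route as the paper: the authors likewise keep the identity on the top row, apply the (shifted) isomorphisms $\varphi$ and $\psi$ to the bottom-left and bottom-right corners respectively, and verify commutativity of the resulting prism using exactly the relations (\ref{eq:moy2b}), (\ref{eq:moy2c}) and (\ref{eq:shift}), concluding by (eq1). Your explicit bookkeeping of the shifts $\varphi\{1\}$ and $\psi\{3\}$, and your remark on why $\psi$ (not $\varphi$) must sit at the bottom-right corner, only make explicit what the paper leaves implicit.
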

\begin{proof}
In the diagram
\[
\xymatrix@C=.5pc{
&\esseb\ar@/_1pc/[dddd]_{\tiny{\left(\begin{matrix}1\\ \alpha\end{matrix}\right)}}
\ar[ddr]^{\chi_0\circ 1}\ar[rrrr]^{\alpha}&&&&\esseb\{2\}\ar[ddl]_{\chi_0\circ 1}\ar@/^1pc/[dddd]^{\tiny{\left(\begin{matrix}1\\ \gamma\end{matrix}\right)}}\\
\\
&&\essecircesseb\{1\}\ar[ddl]_{\varphi}^{\sim}\ar[rr]^-{1\circ\alpha}&&\essecircesseb\{3\}\ar[ddr]^{\psi}_{\sim}\\
\\
&\esseb\oplus \esseb\{2\}\ar[rrrr]_-{\tiny{\left(\begin{matrix}0  & 1\\ 0 & 0\end{matrix}\right)}}&&&&\esseb\{2\} \oplus \esseb\{4\}
}
\]
all the subdiagrams commute, thanks to relations (\ref{eq:moy2b}), (\ref{eq:moy2c}) and (\ref{eq:shift}). As $\varphi$ and $\psi$ are isomorphisms, the top commutative diagram is equivalent to the outer commutative diagram. 
\end{proof}

\begin{corollary}
We have $\gamma\alpha=0$.
\end{corollary}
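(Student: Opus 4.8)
The plan is to read off $\gamma\alpha=0$ directly from Lemma \ref{lemma:two}, with essentially no further work. The key observation is that in our setup a ``bicomplex'' is by definition a diagram whose square \emph{commutes} in $\widetilde{\mathcal{M}}$ (this is exactly how the bicomplexes underlying $\otimes$ and $\circ$ were defined, before totalization introduces signs). Lemma \ref{lemma:two} asserts that its second diagram is a bicomplex; equivalently, in the proof of that lemma the outer square of the large diagram commutes, being intertwined by the isomorphisms $\varphi$ and $\psi$ with the manifestly commuting inner square, via relations \eqref{eq:moy2b}, \eqref{eq:moy2c} and \eqref{eq:shift}. So the only thing I would do is compute the two composites around that outer square and compare them entrywise.

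Concretely, the outer square has left vertical map $\left(\begin{smallmatrix}1\\ \alpha\end{smallmatrix}\right)$, top horizontal map $\alpha$, right vertical map $\left(\begin{smallmatrix}1\\ \gamma\end{smallmatrix}\right)$, and bottom horizontal map $\left(\begin{smallmatrix}0&1\\0&0\end{smallmatrix}\right)$. Going down then right gives
\[
\left(\begin{matrix}0&1\\0&0\end{matrix}\right)\left(\begin{matrix}1\\ \alpha\end{matrix}\right)=\left(\begin{matrix}\alpha\\ 0\end{matrix}\right),
\]
while going right then down gives
\[
\left(\begin{matrix}1\\ \gamma\end{matrix}\right)\alpha=\left(\begin{matrix}\alpha\\ \gamma\alpha\end{matrix}\right).
\]
Commutativity forces these two columns to agree in $\widetilde{\mathcal{M}}$; comparing the lower entries yields $\gamma\alpha=0$. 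That is the entire argument.

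The only point requiring care — and the place where the argument could go astray if handled sloppily — is the bookkeeping of shifts and the direction of the composite $\gamma\alpha$. Here $\alpha\colon S\to S\{2\}$ and $\gamma\colon S\{2\}\to S\{4\}$ are the shifted incarnations of the generators $\alpha,\gamma\colon S\{-1\}\to S\{1\}$ (recall that shifted versions of a generator are written with the same symbol), so $\gamma\alpha$ is genuinely the composite $S\xrightarrow{\alpha}S\{2\}\xrightarrow{\gamma}S\{4\}$ and lands in the lower summand of the target, which is precisely the entry compared above. There is no substantive obstacle beyond this: the whole content of the corollary is the commutativity of the second square in Lemma \ref{lemma:two}, which was already established in its proof, so the corollary is immediate once that commutativity is granted.
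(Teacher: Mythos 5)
Your argument is exactly the paper's proof: it reads off $\gamma\alpha=0$ by computing the two composites around the commuting square of Lemma \ref{lemma:two} and comparing the lower entries, $\left(\begin{smallmatrix}1\\ \gamma\end{smallmatrix}\right)\alpha=\left(\begin{smallmatrix}0&1\\0&0\end{smallmatrix}\right)\left(\begin{smallmatrix}1\\ \alpha\end{smallmatrix}\right)$. The extra remarks on shift bookkeeping are correct but not needed beyond what the paper already takes for granted.
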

\begin{proof}
By Lemma \ref{lemma:two} we have
\[
\left(\begin{matrix}
\alpha \\ \gamma\alpha
\end{matrix}
\right)=
\left(\begin{matrix}1\\ \gamma
\end{matrix}
\right)
\alpha=
\left(\begin{matrix}0 &1\\0&0
\end{matrix}
\right)
\left(\begin{matrix}1\\
\alpha
\end{matrix}
\right)=
\left(\begin{matrix}
\alpha\\0
\end{matrix}
\right).
\]
\end{proof}

\begin{lemma}
The total complex of the bicomplex 
\[
\xymatrix{
\idtwob\ar[r]^{\chi_0}\ar[d]_{\chi_0}& \esseb\{1\}\ar[d]^{\tiny{\left(\begin{matrix}1\\ \alpha\end{matrix}\right)}}\ar[r]^{\alpha}\ar[d]& \esseb\{3\}\ar[d]^{\tiny{\left(\begin{matrix}1\\ \gamma\end{matrix}\right)}}\\
\esseb\{1\}\ar[r]_-{\tiny{\left(\begin{matrix}1\\ 0\end{matrix}\right)}}&\esseb\{1\}\oplus \esseb\{3\}\ar[r]_-{\tiny{\left(\begin{matrix}0  & 1\\ 0 & 0\end{matrix}\right)}}&\esseb\{3\}\oplus \esseb\{5\}
}
\]
is equivalent to
\[
\idtwob\xrightarrow{\chi_0} \esseb\{1\}\xrightarrow{\alpha} \esseb\{3\}\xrightarrow{\gamma} \esseb\{5\}.
\]
\end{lemma}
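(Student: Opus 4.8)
The plan is to totalize the bicomplex and then collapse the result by two applications of the elimination moves (eq2) and (eq3). As a preliminary observation, the left-hand square of the array is exactly the right-hand bicomplex of Lemma~\ref{lemma:one}, while the right-hand square is the right-hand bicomplex of Lemma~\ref{lemma:two} shifted by $\{1\}$; since both are genuine bicomplexes and their common middle column carries the same vertical map $\binom{1}{\alpha}$, the whole $2\times 3$ array is a bicomplex and may be totalized. Giving the top row vertical degree $0$ and the bottom row vertical degree $1$, the total complex reads
\[
\idtwob \longrightarrow \esseb\{1\}\oplus\esseb\{1\} \longrightarrow \esseb\{3\}\oplus\esseb\{1\}\oplus\esseb\{3\} \longrightarrow \esseb\{3\}\oplus\esseb\{5\}
\]
in degrees $0,1,2,3$, its differentials being the horizontal and vertical arrows of the array assembled with the usual totalization signs.

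First I would eliminate the pair joined by the top entry of $\binom{1}{0}$: the degree-$1$ summand $\esseb\{1\}$ in the bottom-left corner maps by the identity onto the first $\esseb\{1\}$-summand of the middle bottom term in degree $2$, which is precisely a subcomplex of the form simplified by (eq2). Cancelling it produces no new differential, because the only nonzero arrow out of the bottom-left $\esseb\{1\}$ is the one being cancelled (the lower entry of $\binom{1}{0}$ is zero, and no vertical arrow runs out of the bottom row), so the Gaussian-elimination correction vanishes. This leaves
\[
\idtwob \xrightarrow{\ \chi_0\ } \esseb\{1\} \longrightarrow \esseb\{3\}\oplus\esseb\{3\} \longrightarrow \esseb\{3\}\oplus\esseb\{5\},
\]
with the two degree-$2$ summands being the top $\esseb\{3\}$ and the surviving bottom $\esseb\{3\}$.

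Then I would eliminate a second pair, this time joined by the lone nonzero entry of $\binom{0\,1}{0\,0}$: the surviving bottom $\esseb\{3\}$ in degree $2$ maps by the identity onto the $\esseb\{3\}$-summand of the degree-$3$ term, so (eq3) applies. I choose to cancel the bottom $\esseb\{3\}$ rather than the top one precisely so that the correction vanishes outright: the image of the bottom $\esseb\{3\}$ in $\esseb\{5\}$ is zero, so no differential is transported onto the remaining top $\esseb\{3\}$. The survivor is
\[
\idtwob \xrightarrow{\ \chi_0\ } \esseb\{1\} \xrightarrow{\ \alpha\ } \esseb\{3\} \xrightarrow{\ \gamma\ } \esseb\{5\},
\]
the degree-$1\to 2$ map being the horizontal $\alpha$ and the degree-$2\to 3$ map being the $\gamma$-entry of $\binom{1}{\gamma}$; residual signs are absorbed by rescaling via (eq1).

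The main obstacle is bookkeeping rather than conceptual. I must pin down the totalization signs and, above all, verify that \emph{both} elimination steps have vanishing correction terms --- this rests entirely on the zero-patterns of $\binom{1}{0}$ and $\binom{0\,1}{0\,0}$ together with the one-way nature of the vertical arrows (all running top to bottom). A related subtlety is that each cancelled target summand is in fact hit by \emph{two} identity arrows (for instance the degree-$3$ summand $\esseb\{3\}$ is reached both through $\binom{1}{\gamma}$ and through $\binom{0\,1}{0\,0}$), so I must select the eliminated pair carefully, as indicated above. That the three surviving maps compose to zero needs no separate check: Gaussian elimination preserves the homotopy type, so the output is automatically a complex; in particular this re-derives the relation $\gamma\alpha=0$ of the preceding Corollary.
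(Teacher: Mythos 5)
Your proof is correct and follows essentially the same route as the paper: write out the total complex, then cancel the two identity pairs, checking in each case that no correction term is transported --- exactly the paper's two successive applications of (eq2). One small mislabeling: your second cancellation is again an instance of (eq2), not (eq3) --- the pair $\esseb\{3\}\xrightarrow{1}\esseb\{3\}$ you cancel is a \emph{subcomplex} (its degree-2 copy maps to nothing besides the degree-3 copy, which is precisely the vanishing you verify), whereas (eq3) would require that nothing map \emph{into} the degree-2 copy, and this fails since it receives $\pm\alpha$ from $\esseb\{1\}$.
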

\begin{proof}
The total complex of the bicomplex we are considering is
\begin{align*}
\idtwob&\xrightarrow{\tiny{\left(\begin{matrix}\chi_0\\ \chi_0\end{matrix}\right)}} \esseb\{1\}\oplus \esseb\{1\} \xrightarrow{\tiny{\left(\begin{matrix}1 &-1\\ 0& -\alpha \\ 0 & \alpha\end{matrix}\right)}} \esseb\{1\}\oplus \esseb\{3\}\oplus \esseb\{3\}\\
&\xrightarrow{\tiny{\left(\begin{matrix}0  & 1 & 1\\ 0 & 0 & \gamma\end{matrix}\right)}} \esseb\{3\}\oplus \esseb\{5\}
\end{align*}
By (eq2) applied around the morphism $\left(\begin{matrix}1 &-1\\ 0& -\alpha \\ 0 & \alpha\end{matrix}\right)$, this complex is equivalent to the complex
\[
\idtwob\xrightarrow{\chi_0}  \esseb\{1\} \xrightarrow{\tiny{\left(\begin{matrix} -\alpha \\  \alpha\end{matrix}\right)}}  \esseb\{3\}\oplus \esseb\{3\}
\xrightarrow{\tiny{\left(\begin{matrix}  1 & 1\\  0 & \gamma\end{matrix}\right)}} \esseb\{3\}\oplus \esseb\{5\}
\]
Next, we apply (eq2) applied around the morphism $\left(\begin{matrix}  1 & 1\\  0 & \gamma\end{matrix}\right)$ to see that this complex is 
equivalent to the complex
\[
\idtwob\xrightarrow{\chi_0}  \esseb\{1\} \xrightarrow{\alpha}  \esseb\{3\}
\xrightarrow{\gamma}  \esseb\{5\}
\]

\end{proof}

\begin{lemma}\label{lemma:three}
The bicomplex
\[
\xymatrix{
\esseb\ar[r]^{\gamma}\ar[d]_{\chi_0\circ 1}& \esseb\{2\}\ar[d]^{\chi_0\circ 1}\\
\essecircesseb\{1\}\ar[r]_-{1\circ \gamma}&\essecircesseb\{3\}
}
\]
is equivalent to the bicomplex
\[
\xymatrix{
\esseb\ar[r]^{\gamma}\ar[d]_{\tiny{\left(\begin{matrix}1\\ \gamma\end{matrix}\right)}}& \esseb\{2\}\ar[d]^{\tiny{\left(\begin{matrix}1\\ \alpha\end{matrix}\right)}}\\
\esseb\oplus \esseb\{2\}\ar[r]_-{\tiny{\left(\begin{matrix}0  & 1\\ 0 & 0\end{matrix}\right)}}&\esseb\{2\}\oplus \esseb\{4\}
}
\]
\end{lemma}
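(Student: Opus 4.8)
The plan is to reproduce the proof of Lemma~\ref{lemma:two} verbatim, with the roles of $\alpha$ and $\gamma$, and correspondingly of the two isomorphisms $\varphi$ and $\psi$, interchanged. First I would assemble the diagram
\[
\xymatrix@C=.5pc{
&\esseb\ar@/_1pc/[dddd]_{\tiny{\left(\begin{matrix}1\\ \gamma\end{matrix}\right)}}
\ar[ddr]^{\chi_0\circ 1}\ar[rrrr]^{\gamma}&&&&\esseb\{2\}\ar[ddl]_{\chi_0\circ 1}\ar@/^1pc/[dddd]^{\tiny{\left(\begin{matrix}1\\ \alpha\end{matrix}\right)}}\\
\\
&&\essecircesseb\{1\}\ar[ddl]_{\psi}^{\sim}\ar[rr]^-{1\circ\gamma}&&\essecircesseb\{3\}\ar[ddr]^{\varphi}_{\sim}\\
\\
&\esseb\oplus \esseb\{2\}\ar[rrrr]_-{\tiny{\left(\begin{matrix}0  & 1\\ 0 & 0\end{matrix}\right)}}&&&&\esseb\{2\} \oplus \esseb\{4\}
}
\]
whose top square is the bicomplex in the statement and whose outer square (the top row together with the two curved vertical arrows and the bottom row) is the bicomplex we want to compare it with. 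Note that, relative to Lemma~\ref{lemma:two}, the isomorphism $\psi$ now sits on the bottom left and $\varphi$ on the bottom right.

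Then I would verify that every subdiagram commutes. The left triangle commutes by relation~(\ref{eq:moy2c}), which gives $\psi\circ(\chi_0\circ 1)=\left(\begin{smallmatrix}1\\ \gamma\end{smallmatrix}\right)$; the right triangle commutes by relation~(\ref{eq:moy2b}), which gives $\varphi\circ(\chi_0\circ 1)=\left(\begin{smallmatrix}1\\ \alpha\end{smallmatrix}\right)$; and the lower square commutes by relation~(\ref{eq:shift2}), which gives $\varphi\circ(1\circ\gamma)=\left(\begin{smallmatrix}0&1\\0&0\end{smallmatrix}\right)\circ\psi$, so that $\psi$ and $\varphi$ conjugate the bottom horizontal $1\circ\gamma$ of the source bicomplex into the bottom horizontal $\left(\begin{smallmatrix}0&1\\0&0\end{smallmatrix}\right)$ of the target. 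The top square commutes because it is the given bicomplex, its commutativity being the instance $(\chi_0\circ 1)\,\gamma=(1\circ\gamma)\,(\chi_0\circ 1)=\chi_0\circ\gamma$ of the compatibility between horizontal and vertical composition. As in Lemma~\ref{lemma:two}, all three relations are to be read in their appropriately shifted instances, with the shifts $\{1\}$ and $\{3\}$ carried along the two copies of $\essecircesseb$.

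Since $\varphi$ and $\psi$ are isomorphisms, the identity on the top row together with $\psi$ and $\varphi$ on the bottom row is an isomorphism of bicomplexes between the top square and the outer square; by (eq1) the two are therefore equivalent, which is the assertion. The argument is entirely parallel to Lemma~\ref{lemma:two}, the only changes being the substitutions $\alpha\leftrightarrow\gamma$ and $\varphi\leftrightarrow\psi$ and the use of (\ref{eq:moy2c}), (\ref{eq:moy2b}), (\ref{eq:shift2}) in place of (\ref{eq:moy2b}), (\ref{eq:moy2c}), (\ref{eq:shift}). I therefore do not expect a genuine obstacle; the only point that requires care is getting the left/right assignment of $\psi$ and $\varphi$ correct and keeping track of the shifts so that each relation is invoked in its correct shifted form.
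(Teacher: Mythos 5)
Your proof is correct and is essentially identical to the paper's own argument: the paper assembles exactly the same diagram, with $\psi$ on the lower left and $\varphi$ on the lower right, and invokes relations (\ref{eq:moy2b}), (\ref{eq:moy2c}) and (\ref{eq:shift2}) to see that all subdiagrams commute, concluding by the invertibility of $\varphi$ and $\psi$ that the top square is equivalent to the outer one. Your write-up is if anything slightly more explicit about which relation justifies which subdiagram, and your bookkeeping of the swap $\alpha\leftrightarrow\gamma$, $\varphi\leftrightarrow\psi$ and of the shifted instances is accurate.
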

\begin{proof}
In the diagram
\[
\xymatrix@C=.5pc{
&\esseb\ar@/_1pc/[dddd]_{\tiny{\left(\begin{matrix}1\\ \gamma\end{matrix}\right)}}
\ar[ddr]^{\chi_0\circ 1}\ar[rrrr]^{\gamma}&&&&\esseb\{2\}\ar[ddl]_{\chi_0\circ 1}\ar@/^1pc/[dddd]^{\tiny{\left(\begin{matrix}1\\ \alpha\end{matrix}\right)}}\\
\\
&&\essecircesseb\{1\}\ar[ddl]_{\psi}^{\sim}\ar[rr]^-{1\circ\gamma}&&\essecircesseb\{3\}\ar[ddr]^{\varphi}_{\sim}\\
\\
&\esseb\oplus \esseb\{2\}\ar[rrrr]_-{\tiny{\left(\begin{matrix}0  & 1\\ 0 & 0\end{matrix}\right)}}&&&&\esseb\{2\} \oplus \esseb\{4\}
}
\]
all the subdiagrams commute, thanks to relations (\ref{eq:moy2b}), (\ref{eq:moy2c}) and (\ref{eq:shift2}). As $\varphi$ and $\psi$ are isomorphisms, the top commutative diagram is equivalent to the outer commutative diagram. 
\end{proof}

\begin{corollary}
We have $\alpha\gamma=0$.
\end{corollary}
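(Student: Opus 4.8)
The plan is to mirror the proof of the preceding corollary $\gamma\alpha=0$, replacing Lemma \ref{lemma:two} by its companion Lemma \ref{lemma:three}. First I would invoke the equivalence of bicomplexes asserted in Lemma \ref{lemma:three} and focus on its right-hand bicomplex, whose top horizontal arrow is $\gamma$, whose left and right vertical arrows are $\left(\begin{smallmatrix}1\\\gamma\end{smallmatrix}\right)$ and $\left(\begin{smallmatrix}1\\\alpha\end{smallmatrix}\right)$ respectively, and whose bottom horizontal arrow is $\left(\begin{smallmatrix}0&1\\0&0\end{smallmatrix}\right)$. Since this is a genuine bicomplex (in particular its single square commutes), the two composites going around the square must agree.

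The corollary then falls out by reading off and equating these two composites, exactly as in the $\gamma\alpha=0$ case with the roles of $\alpha$ and $\gamma$ interchanged:
\[
\left(\begin{matrix}\gamma\\ \alpha\gamma\end{matrix}\right)=\left(\begin{matrix}1\\ \alpha\end{matrix}\right)\gamma=\left(\begin{matrix}0 & 1\\ 0 & 0\end{matrix}\right)\left(\begin{matrix}1\\ \gamma\end{matrix}\right)=\left(\begin{matrix}\gamma\\ 0\end{matrix}\right).
\]
Comparing the lower components yields $\alpha\gamma=0$, which is the claim.

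I do not expect any genuine obstacle here: the entire content is packaged in Lemma \ref{lemma:three}, which itself rests on relations (\ref{eq:moy2b}), (\ref{eq:moy2c}) and (\ref{eq:shift2}) together with the invertibility of $\varphi$ and $\psi$. The argument is thus a formal transcription of the $\gamma\alpha=0$ computation, and the only point deserving a moment's care is the bookkeeping of shifts: one should check that the four corners $S$, $S\{2\}$, $S\oplus S\{2\}$, $S\{2\}\oplus S\{4\}$ of the bicomplex in Lemma \ref{lemma:three} are the same as those appearing in Lemma \ref{lemma:two}, so that the identical $2\times 2$ matrix identity applies verbatim.
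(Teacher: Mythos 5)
Your proposal is correct and coincides with the paper's own proof: both read off the commutativity of the single square in the right-hand bicomplex of Lemma \ref{lemma:three} and equate the lower components of $\left(\begin{smallmatrix}1\\ \alpha\end{smallmatrix}\right)\gamma$ and $\left(\begin{smallmatrix}0 & 1\\ 0 & 0\end{smallmatrix}\right)\left(\begin{smallmatrix}1\\ \gamma\end{smallmatrix}\right)$. The shift bookkeeping you flag is indeed harmless, since the four corners agree with those of Lemma \ref{lemma:two}.
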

\begin{proof}
By Lemma \ref{lemma:three} we have
\[
\left(\begin{matrix}
\gamma \\ \alpha\gamma
\end{matrix}
\right)=
\left(\begin{matrix}1\\ \alpha
\end{matrix}
\right)
\gamma=
\left(\begin{matrix}0 &1\\0&0
\end{matrix}
\right)
\left(\begin{matrix}1\\
\gamma
\end{matrix}
\right)=
\left(\begin{matrix}
\gamma\\0
\end{matrix}
\right).
\]
\end{proof}

\begin{corollary}
The bicomplex
\[
\xymatrix{
\idtwob\ar[r]^{\chi_0}\ar[d]_-{\chi_0\circ 1}& \esseb\{1\}\ar[d]^-{\chi_0\circ 1}\ar[r]^{\alpha}& \esseb\{3\}\ar[d]^-{\chi_0\circ 1}
\ar[r]^{\gamma}& \esseb\{5\}\ar[d]^-{\chi_0\circ 1}\\
\esseb\{1\}\ar[r]^-{1\circ\chi_0}&\essecircesseb\{2\}\ar[r]^-{1\circ \alpha}&\essecircesseb\{4\}
\ar[r]^-{1\circ\gamma}&\essecircesseb\{6\}
}
\]
is equivalent to the bicomplex
\[
{
\xymatrix{
\idtwob\ar[r]^{\chi_0}\ar[d]_{\chi_0}& \esseb\{1\}\ar[d]^{\tiny{\left(\begin{matrix}1\\ \alpha\end{matrix}\right)}}\ar[r]^{\alpha}& \esseb\{3\}\ar[d]^{\tiny{\left(\begin{matrix}1\\ \gamma\end{matrix}\right)}}\ar[r]^{\gamma}\ar[d]& \esseb\{5\}\ar[d]^{\tiny{\left(\begin{matrix}1\\ \alpha\end{matrix}\right)}}\\
\esseb\{1\}\ar[r]_-{\tiny{\left(\begin{matrix}1\\ 0\end{matrix}\right)}}&\esseb\{1\}\oplus \esseb\{3\}\ar[r]_-{\tiny{\left(\begin{matrix}0  & 1\\ 0 & 0\end{matrix}\right)}}&\esseb\{3\}\oplus \esseb\{5\}\ar[r]_-{\tiny{\left(\begin{matrix}0  & 1\\ 0 & 0\end{matrix}\right)}}&\esseb\{5\}\oplus \esseb\{7\}
}
}
\]
\end{corollary}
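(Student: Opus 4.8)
The plan is to produce a single isomorphism of bicomplexes from the first displayed bicomplex to the second and then to invoke (eq1). The source bicomplex is the concatenation, along shared columns, of three $2\times 2$ squares: the square on the first two columns is exactly the bicomplex of Lemma~\ref{lemma:one}; the square on the middle two columns is the bicomplex of Lemma~\ref{lemma:two} shifted by $\{1\}$; and the square on the last two columns is the bicomplex of Lemma~\ref{lemma:three} shifted by $\{3\}$. Each of these three lemmas supplies an isomorphism onto the corresponding square of the target bicomplex, given (in the notation of the cited proofs) by the identity on the top row together with $\varphi$ and $\psi$ on the bottom corners.

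First I would read off, from the three proofs, the isomorphism assigned to each bottom-row object. Lemma~\ref{lemma:one} replaces $\essecircesseb\{2\}$ by means of $\varphi\{2\}$; Lemma~\ref{lemma:two} (shifted by $\{1\}$) replaces $\essecircesseb\{2\}$ by $\varphi\{2\}$ and $\essecircesseb\{4\}$ by $\psi\{4\}$; and Lemma~\ref{lemma:three} (shifted by $\{3\}$) replaces $\essecircesseb\{4\}$ by $\psi\{4\}$ and $\essecircesseb\{6\}$ by $\varphi\{6\}$. The decisive point is that the isomorphisms prescribed for the two overlap columns coincide: column two is assigned $\varphi\{2\}$ by both Lemma~\ref{lemma:one} and Lemma~\ref{lemma:two}, and column three is assigned $\psi\{4\}$ by both Lemma~\ref{lemma:two} and Lemma~\ref{lemma:three}. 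This matching---which is precisely why Lemma~\ref{lemma:two} is stated with the pair $(\varphi,\psi)$ and Lemma~\ref{lemma:three} with the pair $(\psi,\varphi)$---lets the three local isomorphisms patch into one global isomorphism of bicomplexes, namely the identity on the entire top row and $(1,\varphi\{2\},\psi\{4\},\varphi\{6\})$ on the bottom row.

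Finally I would check that this family of maps really is a morphism of bicomplexes, i.e.\ that each vertical and each horizontal square commutes; but every one of these commutations has already been used in the three lemmas. After shifting, the three nontrivial vertical squares are the identities $\varphi\circ(\chi_0\circ 1)=\left(\begin{smallmatrix}1\\\alpha\end{smallmatrix}\right)$ and $\psi\circ(\chi_0\circ 1)=\left(\begin{smallmatrix}1\\\gamma\end{smallmatrix}\right)$, that is, relations (\ref{eq:moy2b}) and (\ref{eq:moy2c}); the three horizontal squares along the bottom are $\varphi\circ(1\circ\chi_0)=\left(\begin{smallmatrix}1\\0\end{smallmatrix}\right)$, $\psi\circ(1\circ\alpha)=\left(\begin{smallmatrix}0&1\\0&0\end{smallmatrix}\right)\circ\varphi$ and $\varphi\circ(1\circ\gamma)=\left(\begin{smallmatrix}0&1\\0&0\end{smallmatrix}\right)\circ\psi$, that is, relations (\ref{eq:moy2a}), (\ref{eq:shift}) and (\ref{eq:shift2}). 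Since $\varphi$ and $\psi$ are invertible, (eq1) then yields the claimed equivalence. I expect the only step requiring genuine care to be the compatibility of the shared-column isomorphisms highlighted above; everything else is the shift bookkeeping already performed inside Lemmas~\ref{lemma:one}--\ref{lemma:three}.
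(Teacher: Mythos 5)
Your proposal is correct and is essentially the paper's own argument: the paper's proof reads ``Immediate from Lemma \ref{lemma:one}, Lemma \ref{lemma:two}, Lemma \ref{lemma:three}, and their proofs,'' and what you have written is precisely the expansion of that, including the one point the paper leaves implicit (that the isomorphisms $\varphi\{2\}$ and $\psi\{4\}$ assigned to the overlap columns by consecutive lemmas agree, which is why the lemmas alternate the roles of $\varphi$ and $\psi$). Nothing further is needed.
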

\begin{proof}
Immediate from Lemma \ref{lemma:one}, Lemma \ref{lemma:two}, Lemma \ref{lemma:three}, and their proofs. 
\end{proof}

\begin{lemma}
The total complex of the bicomplex 
\[
{
\xymatrix{
\idtwob\ar[r]^{\chi_0}\ar[d]_{\chi_0}& \esseb\{1\}\ar[d]^{\tiny{\left(\begin{matrix}1\\ \alpha\end{matrix}\right)}}\ar[r]^{\alpha}& \esseb\{3\}\ar[d]^{\tiny{\left(\begin{matrix}1\\ \gamma\end{matrix}\right)}}\ar[r]^{\gamma}\ar[d]& \esseb\{5\}\ar[d]^{\tiny{\left(\begin{matrix}1\\ \alpha\end{matrix}\right)}}\\
\esseb\{1\}\ar[r]_-{\tiny{\left(\begin{matrix}1\\ 0\end{matrix}\right)}}&\esseb\{1\}\oplus \esseb\{3\}\ar[r]_-{\tiny{\left(\begin{matrix}0  & 1\\ 0 & 0\end{matrix}\right)}}&\esseb\{3\}\oplus \esseb\{5\}\ar[r]_-{\tiny{\left(\begin{matrix}0  & 1\\ 0 & 0\end{matrix}\right)}}&\esseb\{5\}\oplus \esseb\{7\}
}
}
\]
is equivalent to
\[
\idtwob\xrightarrow{\chi_0} \esseb\{1\}\xrightarrow{\alpha} \esseb\{3\}\xrightarrow{\gamma} \esseb\{5\}\xrightarrow{\alpha}\esseb\{7\} .
\]
\end{lemma}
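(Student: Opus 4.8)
The plan is to mimic, with one additional step, the proof of the preceding lemma that totalised the analogous $2\times 3$ bicomplex.

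\emph{Step 1: write out the total complex.} Using the totalisation sign convention in which the vertical differential sitting in column $i$ is weighted by $(-1)^i$, the total complex of the given bicomplex is
\[
\idtwob \xrightarrow{\left(\begin{smallmatrix}\chi_0\\ \chi_0\end{smallmatrix}\right)} \esseb\{1\}\oplus\esseb\{1\}\xrightarrow{\left(\begin{smallmatrix}1&-1\\0&-\alpha\\0&\alpha\end{smallmatrix}\right)}\esseb\{1\}\oplus\esseb\{3\}\oplus\esseb\{3\}\xrightarrow{\left(\begin{smallmatrix}0&1&1\\0&0&\gamma\\0&0&\gamma\end{smallmatrix}\right)}\esseb\{3\}\oplus\esseb\{5\}\oplus\esseb\{5\}\xrightarrow{\left(\begin{smallmatrix}0&1&-1\\0&0&-\alpha\end{smallmatrix}\right)}\esseb\{5\}\oplus\esseb\{7\}.
\]
The vanishing of consecutive composites is exactly where the identities $\gamma\alpha=0$ and $\alpha\gamma=0$ of the two corollaries enter; here they hold automatically, since we start from a genuine bicomplex.

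\emph{Step 2: cancel the three identity subcomplexes.} Each vertical map $\left(\begin{smallmatrix}1\\ \ast\end{smallmatrix}\right)$ contributes a leading $1$ to one of the last three differentials. I would apply (eq2) three times in succession: first around the $1$ in the top-left corner of the second differential, then (after reordering summands, which is an (eq1) isomorphism) around the analogous $1$ of the third differential, and finally around the $1$ of the fourth differential. At each stage one checks that the row and column to be eliminated have precisely the shape demanded by (eq2): the required zero entries come from the strictly upper-triangular bottom-row maps $\left(\begin{smallmatrix}0&1\\0&0\end{smallmatrix}\right)$ together with the fact that each $\left(\begin{smallmatrix}1\\ \ast\end{smallmatrix}\right)$ meets only the first summand. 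The first reduction extracts the bare arrow $\alpha$, the second the bare arrow $\gamma$, and the third the bare arrow $-\alpha$.

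\emph{Step 3: fix the sign.} The result of the three reductions is
\[
\idtwob\xrightarrow{\chi_0}\esseb\{1\}\xrightarrow{\alpha}\esseb\{3\}\xrightarrow{\gamma}\esseb\{5\}\xrightarrow{-\alpha}\esseb\{7\},
\]
which agrees with the claimed complex except for the sign of the last arrow (this sign is genuine, since the last vertical map lives in the odd column $i=3$, unlike in the $2\times 3$ case where the terminal column had even index). Rescaling the terminal $\esseb\{7\}$ by $-1$ is an invertible chain morphism, so by (eq1) this complex is equivalent to the one with $\alpha$ in the last position, which is the assertion.

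The main obstacle is bookkeeping rather than conceptual: tracking the totalisation signs correctly (the odd terminal column index is exactly what produces the spurious minus sign) and verifying, at each of the three (eq2) steps, that the eliminated row and column are of the exact form the relation requires. Everything else is a one-step-longer transcription of the $2\times 3$ computation carried out in the previous lemma.
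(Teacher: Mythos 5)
Your proposal is correct and follows essentially the same route as the paper: the same explicit total complex (identical matrices), the same three successive applications of (eq2) around the identity entries of the second, third, and fourth differentials, and the same final (eq1) rescaling of the terminal $\esseb\{7\}$ by $-1$ to absorb the sign on the last arrow. The only cosmetic imprecision is that the bare arrow $\alpha$ actually emerges after the second (eq2) step rather than the first (the first step leaves $\left(\begin{smallmatrix}-\alpha\\ \alpha\end{smallmatrix}\right)$), but this does not affect the argument.
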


\begin{proof}
The total complex of the bicomplex we are considering is

\[
\idtwob\xrightarrow{\tiny{\left(\begin{matrix}\chi_0\\ \chi_0\end{matrix}\right)}} \esseb\{1\}\oplus \esseb\{1\} \xrightarrow{\tiny{\left(\begin{matrix}1 &-1\\ 0& -\alpha \\ 0 & \alpha\end{matrix}\right)}} \esseb\{1\}\oplus \esseb\{3\}\oplus \esseb\{3\}
\xrightarrow{\tiny{\left(\begin{matrix}0  & 1 & 1\\ 0 & 0 & \gamma\\ 0 & 0 &\gamma \end{matrix}\right)}} \esseb\{3\}\oplus \esseb\{5\}\oplus \esseb\{5\}
\xrightarrow{\tiny{\left(\begin{matrix}  0 & 1 &-1\\  0 & 0 &-\alpha\end{matrix}\right)}} \esseb\{5\}\oplus \esseb\{7\}
\] 
By (eq2) applied around the morphism $\left(\begin{matrix}1 &-1\\ 0& -\alpha \\ 0 & \alpha\end{matrix}\right)$, this complex is equivalent to the complex

\[
\idtwob\xrightarrow{\chi_0}  \esseb\{1\} \xrightarrow{\tiny{\left(\begin{matrix} -\alpha \\  \alpha\end{matrix}\right)}}  \esseb\{3\}\oplus \esseb\{3\}
\xrightarrow{\tiny{\left(\begin{matrix}  1 & 1\\  0 & \gamma \\  0 & \gamma\end{matrix}\right)}} \esseb\{3\}\oplus \esseb\{5\}\oplus \esseb\{5\}\xrightarrow{\tiny{\left(\begin{matrix}  0 & 1 &-1\\  0 & 0 &-\alpha\end{matrix}\right)}}\esseb\{5\}\oplus \esseb\{7\}
\]

Next, we apply (eq2) applied around the morphism $\left(\begin{matrix}  1 & 1\\  0 & \gamma\\  0 & \gamma\end{matrix}\right)$ to see that this complex is 
equivalent to the complex
\[
\idtwob\xrightarrow{\chi_0}  \esseb\{1\} \xrightarrow{\alpha}  \esseb\{3\}
\xrightarrow{\tiny{\left(\begin{matrix}   \gamma\\   \gamma\end{matrix}\right)}}  \esseb\{5\}\oplus \esseb\{5\}
\xrightarrow{\tiny{\left(\begin{matrix}   1 &-1\\   0 &-\alpha\end{matrix}\right)}}\esseb\{5\}\oplus \esseb\{7\}
\]
Finally, we apply (eq2) applied around the morphism $\left(\begin{matrix}  1 & -1\\  0 & -\alpha\end{matrix}\right)$ to see that this complex is 
equivalent to the complex
\[
\idtwob\xrightarrow{\chi_0}  \esseb\{1\} \xrightarrow{\alpha}  \esseb\{3\}
\xrightarrow{\gamma}  \esseb\{5\}
\xrightarrow{-\alpha}\oplus \esseb\{7\}
\]
This is in turn manifestly equivalent to
the complex
\[
\idtwob\xrightarrow{\chi_0}  \esseb\{1\} \xrightarrow{\alpha}  \esseb\{3\}
\xrightarrow{\gamma}  \esseb\{5\}
\xrightarrow{\alpha} \esseb\{7\}
\]
by (eq1) and the evident isomorphism of complexes
\[
\xymatrix{
\idtwob\ar[r]^{\chi_0}\ar[d]_{1}  & \esseb\{1\}\ar[r]^{\alpha}\ar[d]_{1} &  \esseb\{3\}\ar[r]^{\gamma}\ar[d]_{1} 
&  \esseb\{5\}\ar[r]^{-\alpha}\ar[d]_{1} &
\esseb\{7\}\ar[d]_{-1}\\
\idtwob\ar[r]^{\chi_0}  & \esseb\{1\}\ar[r]^{\alpha}& \esseb\{3\}\ar[r]^{\gamma} 
&  \esseb\{5\}\ar[r]^{\alpha}&
\esseb\{7\}
}
\]
\end{proof}

From this point on we can proceed inductively. This way one proves the following
\begin{proposition}\label{main-prop}
The total complex of the bicomplex
\[
{
\xymatrix{
\idtwob\ar[r]^{\chi_0}\ar[d]_-{\chi_0\circ 1}& \esseb\{1\}\ar[d]^-{\chi_0\circ 1}\ar[r]^{\alpha}& \esseb\{3\}\ar[d]^-{\chi_0\circ 1}
\ar[r]^{\gamma}& \esseb\{5\}\ar[d]^-{\chi_0\circ 1}\ar[r]^-{\alpha}&\cdots\ar[r]^-{\alpha}& \esseb\{4k-1\}\ar[d]^-{\chi_0\circ 1}\\
\esseb\{1\}\ar[r]^-{1\circ\chi_0}&\essecircesseb\{2\}\ar[r]^-{1\circ \alpha}&\essecircesseb\{4\}\ar[r]^-{1\circ\gamma}&\essecircesseb\{6\}
\ar[r]^-{1\circ\alpha}&\cdots\ar[r]^-{1\circ\alpha}& \essecircesseb\{4k\}
}}
\]
is equivalent to the complex
\[
{
\idtwob\xrightarrow{\chi_0} \esseb\{1\}\xrightarrow{\alpha} \esseb\{3\}\xrightarrow{\gamma} \esseb\{5\}\xrightarrow{\alpha} \cdots
\xrightarrow{\alpha}\esseb\{4k-1\}\xrightarrow{\gamma}\esseb\{4k+1\}
}.
\]
and the total complex of the bicomplex
\[
{
\xymatrix{
\idtwob\ar[r]^{\chi_0}\ar[d]_-{\chi_0\circ 1}& \esseb\{1\}\ar[d]^-{\chi_0\circ 1}\ar[r]^{\alpha}& \esseb\{3\}\ar[d]^-{\chi_0\circ 1}
\ar[r]^{\gamma}& \esseb\{5\}\ar[d]^-{\chi_0\circ 1}\ar[r]^-{\alpha}&\cdots\ar[r]^-{\gamma}& \esseb\{4k+1\}\ar[d]^-{\chi_0\circ 1}\\
\esseb\{1\}\ar[r]^-{1\circ\chi_0}&\essecircesseb\{2\}\ar[r]^-{1\circ \alpha}&\essecircesseb\{4\}\ar[r]^-{1\circ\gamma}&\essecircesseb\{6\}
\ar[r]^-{1\circ\alpha}&\cdots\ar[r]^-{1\circ\gamma}& \essecircesseb\{4k+2\}
}}
\]
is equivalent to the complex
\[
{
\idtwob\xrightarrow{\chi_0} \esseb\{1\}\xrightarrow{\alpha} \esseb\{3\}\xrightarrow{\gamma} \esseb\{5\}\xrightarrow{\alpha} \cdots
\xrightarrow{\gamma}\esseb\{4k+1\}\xrightarrow{\alpha}\esseb\{4k+3\}
}.
\]
\end{proposition}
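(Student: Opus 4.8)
The plan is to argue by induction on the number of columns, using the three structural Lemmas \ref{lemma:one}, \ref{lemma:two}, \ref{lemma:three} together with the explicit four-column computation just carried out as the engine of the induction. Conceptually the argument splits into two stages: first I would simplify the bicomplex column-by-column so that its bottom row becomes a string of direct sums $\esseb\{2i-1\}\oplus\esseb\{2i+1\}$ joined by the nilpotent blocks $\left(\begin{smallmatrix}0&1\\0&0\end{smallmatrix}\right)$, and then I would totalize and repeatedly apply Gaussian elimination (eq2) to telescope the result down to the single alternating complex $\idtwob\xrightarrow{\chi_0}\esseb\{1\}\xrightarrow{\alpha}\esseb\{3\}\xrightarrow{\gamma}\cdots$.

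First I would establish the length-$k$ analogue of the Corollary preceding this Proposition: each of the two bicomplexes is equivalent to the bicomplex with the same top row but with bottom row
\[
\esseb\{1\}\to \esseb\{1\}\oplus\esseb\{3\}\to\esseb\{3\}\oplus\esseb\{5\}\to\cdots,
\]
whose vertical maps are $\chi_0$ on the leftmost column and then the alternating column vectors $\binom{1}{\alpha},\binom{1}{\gamma},\binom{1}{\alpha},\dots$, and whose horizontal maps are $\binom{1}{0}$ followed by copies of $\left(\begin{smallmatrix}0&1\\0&0\end{smallmatrix}\right)$. This is obtained by applying Lemma \ref{lemma:one} to the leftmost square and Lemmas \ref{lemma:two} and \ref{lemma:three} to the subsequent squares, exactly as in the four-column Corollary: in each square the isomorphism $\varphi$ or $\psi$ converts $\essecircesseb\{2i\}$ into $\esseb\{2i-1\}\oplus\esseb\{2i+1\}$, and relations (\ref{eq:moy2a}), (\ref{eq:moy2b}), (\ref{eq:moy2c}), (\ref{eq:shift}) and (\ref{eq:shift2}) force all the relevant subtriangles to commute, so that (eq1) identifies the two bicomplexes.

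Next I would totalize this simplified bicomplex and perform Gaussian elimination. As in the four-column Lemma, the first vertical map is $\binom{\chi_0}{\chi_0}$, and the Koszul-signed total differential out of $\esseb\{1\}\oplus\esseb\{1\}$ is $\left(\begin{smallmatrix}1&-1\\0&-\alpha\\0&\alpha\end{smallmatrix}\right)$; applying (eq2) around the leading $1$ removes the repeated $\esseb\{1\}$ and leaves $\esseb\{1\}\xrightarrow{\binom{-\alpha}{\alpha}}\esseb\{3\}\oplus\esseb\{3\}$. Each subsequent column contributes one $\left(\begin{smallmatrix}0&1\\0&0\end{smallmatrix}\right)$-block whose top-right entry is an identity, so (eq2) fires once per column; performing these eliminations in order from left to right collapses the total complex to a single row whose successive differentials are $\pm\alpha$ and $\pm\gamma$ in the alternating pattern dictated by the column labels. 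A final application of (eq1), using the sign-fixing isomorphism of complexes that multiplies alternate terms by $-1$ (as displayed in the four-column Lemma), replaces these signs by $+\alpha$ and $+\gamma$ and yields the claimed complex, terminating in $\gamma$ in the first case and in $\alpha$ in the second.

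The hard part will be making the induction uniform rather than ad hoc: one must verify that after each (eq2) elimination the surviving differential is again exactly an $\alpha$ or a $\gamma$ (and not some contaminated combination), and that the Koszul signs produced by totalization are consistent from column to column so that a single global (eq1) sign-correction suffices. This amounts to checking that the local picture stabilizes, i.e.\ that the inductive step reproduces the pattern of the four-column Lemma with indices shifted by $2$, and it rests on the identities $\gamma\alpha=0$ and $\alpha\gamma=0$ proved in the two Corollaries above, since these are precisely what make consecutive differentials compose to zero and keep the eliminated object a genuine complex at each stage.
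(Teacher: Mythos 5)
Your proposal is correct and follows essentially the same route as the paper, whose own ``proof'' consists precisely of the remark that one proceeds inductively from Lemmas \ref{lemma:one}, \ref{lemma:two}, \ref{lemma:three} and the worked four-column case, converting each square via $\varphi$ or $\psi$ and then telescoping the total complex by repeated (eq2) eliminations with a final (eq1) sign correction. The uniformity concern you flag (that each elimination leaves a clean $\pm\alpha$ or $\pm\gamma$ and that the Koszul signs stabilize) is exactly what the paper leaves implicit, and your appeal to $\gamma\alpha=0$ and $\alpha\gamma=0$ is the right ingredient to close it.
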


\subsection{The computation}
Proposition \ref{main-prop} immediately leads us to our main result.

\begin{theorem}\label{main-thm}
We have, for any $k\geq 0$,
\begin{align*}
Kh_n\left(\,\splusk{2k+1}\, \right)&
=\left[\idtwob\xrightarrow{\chi_0} \esseb\{1\}\xrightarrow{\alpha} \esseb\{3\}\xrightarrow{\gamma} \esseb\{5\}\right.
\\
&\left.\xrightarrow{\alpha} \cdots
\xrightarrow{\alpha}\esseb\{4k-1\}\xrightarrow{\gamma}\esseb\{4k+1\}\right]\{(n-1)(2k+1)\}
\end{align*}
and, for any $k\geq 1$,
\begin{align*}
Kh_n\left(\,\splusk{2k}\, \right)&
=\left[\idtwob\xrightarrow{\chi_0} \esseb\{1\}\xrightarrow{\alpha} \esseb\{3\}\xrightarrow{\gamma} \esseb\{5\}\right.
\\
&\left.\xrightarrow{\alpha} \cdots
\xrightarrow{\gamma}\esseb\{4k-3\}\xrightarrow{\alpha}\esseb\{4k-1\}\right]\{(n-1)(2k)\}
\end{align*}

\end{theorem}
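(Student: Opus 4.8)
The plan is to deduce the theorem from Proposition \ref{main-prop} by a single induction on the number of crossings, after recognizing $Kh_n$ of the braid as an iterated composition of the elementary positive crossing.

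First I would record the reduction to an iterated composite. Since $Kh_n\colon\mathbf{TD}\to\mathbf{KR}$ is a monoidal functor and the open $2$-strand braid $\splusk{m}$ is, as a morphism $\uparrow^1\uparrow^1\to\uparrow^1\uparrow^1$, the $m$-fold composite of the single positive crossing, functoriality gives
\[
Kh_n\!\left(\,\splusk{m}\,\right)=\underbrace{\sigma^+\circ\sigma^+\circ\cdots\circ\sigma^+}_{m}.
\]
Writing $L_1=\bigl[\idtwob\xrightarrow{\chi_0}\esseb\{1\}\bigr]$ with $\idtwob$ in degree $0$, the braiding proposition gives $\sigma^+=L_1\{n-1\}$, since $\{1\}\{n-1\}=\{n\}$. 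Because composition $\circ$ is compatible with the equivalence relations (eq1)--(eq3) and with shifts (Proposition \ref{prop:operations}), it descends to $\mathbf{FC_{MOY}}$, so I may peel off the cumulative shift to obtain
\[
Kh_n\!\left(\,\splusk{m}\,\right)=\bigl(L_1\bigr)^{\circ m}\,\{m(n-1)\}.
\]
It therefore suffices to prove $\bigl(L_1\bigr)^{\circ m}=L_m$, where $L_m$ denotes the length-$(m+1)$ linear complex $\idtwob\xrightarrow{\chi_0}\esseb\{1\}\xrightarrow{\alpha}\esseb\{3\}\xrightarrow{\gamma}\esseb\{5\}\to\cdots$ with the alternating $\alpha,\gamma$ differentials of the statement.

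I would then run the induction on $m$. The base case $m=1$ is the tautology $\bigl(L_1\bigr)^{\circ 1}=L_1$, which is the $k=0$ instance of the odd formula. For the inductive step write $\bigl(L_1\bigr)^{\circ(m+1)}=L_1\circ\bigl(L_1\bigr)^{\circ m}$ and substitute the inductive hypothesis $\bigl(L_1\bigr)^{\circ m}=L_m$, which is legitimate because $\circ$ is well defined on equivalence classes. By the definition of composition of formal complexes, $L_1\circ L_m$ is the total complex of the two-column bicomplex whose $i=0$ column is $L_m$ (differentials $\chi_0,\alpha,\gamma,\dots$), whose $i=1$ column is $\esseb\{1\}\circ L_m$ (differentials $1\circ\chi_0,1\circ\alpha,1\circ\gamma,\dots$), and whose horizontal differential is $\chi_0\circ 1$. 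Up to interchanging the two axes, which does not affect the total complex, this is exactly the bicomplex of Proposition \ref{main-prop}. Hence $L_1\circ L_m\simeq L_{m+1}$: the first part of the Proposition supplies the step $L_{2k}\rightsquigarrow L_{2k+1}$ (even to odd) and the second part the step $L_{2k+1}\rightsquigarrow L_{2k+2}$ (odd to even). Feeding these in alternately closes the induction and yields both displayed formulas; a short check of the arithmetic of the shifts $\{2j+1\}$ confirms that the last differential is $\gamma$ into $\esseb\{4k+1\}$ in the odd case and $\alpha$ into $\esseb\{4k-1\}$ in the even case, as claimed.

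I expect the genuinely substantive work to be entirely contained in the preliminary lemmas and in Proposition \ref{main-prop}, so the theorem is essentially an assembly step. The only delicate points are bookkeeping: (i) justifying that one may replace $\bigl(L_1\bigr)^{\circ m}$ by its simplified representative $L_m$ \emph{inside} the composition, which rests on Proposition \ref{prop:operations}; (ii) matching the composition bicomplex of $L_1$ and $L_m$ with the bicomplex of Proposition \ref{main-prop} up to the harmless transposition of axes; and (iii) tracking the cumulative degree shift $\{m(n-1)\}$ contributed by the $m$ copies of $\sigma^+$. The main obstacle, such as it is, is keeping the shift conventions and the $\alpha/\gamma$ alternation aligned across the two parities so that the single induction closes uniformly; none of this requires any relation beyond those already invoked for Lemma \ref{lemma:one} and Proposition \ref{main-prop}.
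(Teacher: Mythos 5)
Your proposal is correct and follows essentially the same route as the paper: induction on the number of crossings, writing the $(m+1)$-crossing braid as the composite of the $m$-crossing braid with one more positive crossing, using functoriality and the definition of composition of formal complexes to identify the result with the totalization of the two-row bicomplex, and invoking Proposition \ref{main-prop} to simplify it. Your reorganization (peeling off the cumulative shift $\{m(n-1)\}$ up front and inducting on the unshifted complexes $L_m$) is only a cosmetic difference from the paper's argument.
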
 
\begin{proof}
We prove the statement inductively on the number $m$ of crossings. The base of the induction is $m=1$. In this case we need to show that
\[
Kh_n\left(\,\splus\, \right)
=\left[\idtwob\xrightarrow{\chi_0} \esseb\{1\}\right]\{n-1\}
\]
and this is true by definition of $Kh$. Assume now the statement has been proven up to $m=m_0$ an let us prove it for $m=m_0+1$. If $m_0=2k-1$ then we know by the inductive hypothesis that
\begin{align*}
Kh_n\left(\,\splusk{2k-1}\, \right)&
=\left[\idtwob\xrightarrow{\chi_0} \esseb\{1\}\xrightarrow{\alpha} \esseb\{3\}\xrightarrow{\gamma} \esseb\{5\}\right.
\\
&\left.\xrightarrow{\alpha} \cdots
\xrightarrow{\alpha}\esseb\{4k-5\}\xrightarrow{\gamma}\esseb\{4k-3\}\right]\{(n-1)(2k-1)\}
\end{align*}
As
\[
\splusk{2k}=\splusk{2k-1}\quad \circ \quad \splus\,,
\]
we have
\[Kh_n\left(\,\splusk{2k}\,\right)=Kh_n\left(\,\splusk{2k-1}\,\right) \circ Kh_n\left(\,\splus\,\right)=
\]
\[
=[\mathrm{tot}\left(\raisebox{24pt}{
\xymatrix{
\idtwob\ar[r]^{\chi_0}\ar[d]_-{\chi_0\circ 1}& \esseb\{1\}\ar[d]^-{\chi_0\circ 1}\ar[r]^{\alpha}& \esseb\{3\}\ar[d]^-{\chi_0\circ 1}
\ar[r]^{\gamma}& \esseb\{5\}\ar[d]^-{\chi_0\circ 1}\ar[r]^-{\alpha}&\cdots\ar[r]^-{\gamma}& \esseb\{4k-3\}\ar[d]^-{\chi_0\circ 1}\\
\esseb\{1\}\ar[r]^-{1\circ\chi_0}&\essecircesseb\{2\}\ar[r]^-{1\circ \alpha}&\essecircesseb\{4\}\ar[r]^-{1\circ\gamma}&\essecircesseb\{6\}
\ar[r]^-{1\circ\alpha}&\cdots\ar[r]^-{1\circ\gamma}& \essecircesseb\{4k-2\}
}}
\right)]\{(n-1)(2k)\}
\]
\[
={
\idtwob\xrightarrow{\chi_0} \esseb\{1\}\xrightarrow{\alpha} \esseb\{3\}\xrightarrow{\gamma} \esseb\{5\}\xrightarrow{\alpha} \cdots
\xrightarrow{\gamma}\esseb\{4k-3\}\xrightarrow{\alpha}\esseb\{4k-1\}
}]\{(n-1)(2k)\},
\]
by Proposition \ref{main-prop}. Therefore the statement is true for $m=m_0+1$ in this case. The proof for the case $m_0=2k$ is completely analogous.
\end{proof}
By ``closing the diagrams'', we immediately get the following.
\begin{corollary}
We have, for any $k\geq 0$,
\begin{align*}
Kh_n\left(\,\trsplusk{2k+1}\,\right)&
=\left[ [n] \{1-n\}\,
\xrightarrow{0} 0\xrightarrow{0} [n-1]\{4-n\} 
\xrightarrow{0} [n-1]\{4+n\} 
\xrightarrow{0} \cdots\right.\\
&\hskip -12 pt\left.\xrightarrow{0}[n-1]\{4k-n\}  
\xrightarrow{0} [n-1]\{4k+n\} \right]\{(n-1)(2k+1)\}
\end{align*}
and, for any $k\geq 1$,
\begin{align*}
Kh_n\left(\,\trsplusk{2k}\,\right)&
=\left[ [n] \{1-n\}\,
\xrightarrow{0} 0\xrightarrow{0} [n-1]\{4-n\} 
\xrightarrow{0} [n-1]\{4+n\} 
\xrightarrow{0} \cdots\right.\\
&\hskip -82 pt\left.\xrightarrow{0}[n-1]\{4(k-1)-n\}  \xrightarrow{0}[n-1]\{4(k-1)+n\}
\xrightarrow{0} [n][n-1]\{4k-1\} \right]\{(n-1)(2k)\}
\end{align*}
\end{corollary}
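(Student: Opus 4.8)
The plan is to obtain both formulas by literally ``closing up'' the open-braid complexes produced by Theorem~\ref{main-thm}. Closing the two strands of a $2$-braid is the categorical (Markov) trace in the rigid category $\mathbf{KR}$; since this trace is assembled by pre- and post-composing with the degree-zero evaluation and coevaluation complexes, it is compatible with composition, tensor product, totalization, and with the equivalence relations (eq1)--(eq3). Hence it suffices to apply it termwise to the complexes of Theorem~\ref{main-thm} and then simplify. I would perform the closure in two stages: first close the right-hand strand (a partial trace turning $\uparrow^1\uparrow^1\to\uparrow^1\uparrow^1$ into $\uparrow^1\to\uparrow^1$), then close the remaining strand (turning $\uparrow^1\to\uparrow^1$ into $\emptyset\to\emptyset$, i.e.\ into a complex of graded $\mathbb{Q}$-vector spaces). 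This two-stage bookkeeping is exactly what the relations are tailored to.

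First I would record the closures of the two objects occurring in the complex. Closing one strand of $\idtwob$ creates a circle, so by $\lambda$ it becomes $[n]\,\mathrm{id}_1$; closing the last strand produces a second circle and hence $\mathrm{tr}(\idtwob)=[n]\otimes[n]$. Closing one strand of $\esseb$ produces a digon, which by $\mu$ equals $[n-1]\,\mathrm{id}_1$; closing the last strand then gives $\mathrm{tr}(\esseb)=[n][n-1]$. Next I would compute the closures of the three differentials, applying the one-strand-closure relations followed by $\lambda$ or (\ref{eq:moyepsilon}). Relation (\ref{eq:moychi0}) (together with (\ref{eq:moychi1})) shows that the one-strand closure of $\chi_0$ is the matrix $\left(\begin{smallmatrix}0&1\end{smallmatrix}\right)$, so after closing the second strand $\mathrm{tr}(\chi_0)\colon [n]\{1-n\}\oplus[n][n-1]\{1\}\to[n][n-1]\{1\}$ is an isomorphism onto the second summand and annihilates the first, using the splitting $[n]\otimes[n]\cong[n]\{1-n\}\oplus[n][n-1]\{1\}$. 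Relation (\ref{eq:moichi0a}) shows the one-strand closure of $\alpha$ is $0$, whence $\mathrm{tr}(\alpha)=0$; relation (\ref{eq:moichi0b}) shows the one-strand closure of $\gamma$ is $[n-1]\,\epsilon$, and then (\ref{eq:moyepsilon}) identifies $\mathrm{tr}(\epsilon)$ with $\left(\begin{smallmatrix}0&1\\0&0\end{smallmatrix}\right)$ on $\mathbb{Q}\{-n\}\oplus[n-1]\to[n-1]\oplus\mathbb{Q}\{n\}$, so $\mathrm{tr}(\gamma)$ is an isomorphism on the common $[n-1]\otimes[n-1]$ ``middle'' and zero on the two extreme $\mathbb{Q}$-lines.

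With these in hand the closed complex (odd case) reads
\[
[n]\otimes[n]\xrightarrow{\ \mathrm{tr}(\chi_0)\ }[n][n-1]\{1\}\xrightarrow{\ 0\ }[n][n-1]\{3\}\xrightarrow{\ \mathrm{tr}(\gamma)\ }[n][n-1]\{5\}\xrightarrow{\ 0\ }\cdots,
\]
the differentials alternating $\mathrm{tr}(\alpha)=0$ and $\mathrm{tr}(\gamma)$ after the initial $\mathrm{tr}(\chi_0)$. Because every $\mathrm{tr}(\alpha)$ vanishes, the complex decouples (apart from degree $0$) into two-term blocks joined by a single $\mathrm{tr}(\gamma)$, and I would collapse it with (eq2)/(eq3). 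The map $\mathrm{tr}(\chi_0)$ cancels all of degree $1$ against the $[n][n-1]\{1\}$ summand of degree $0$, leaving its complement $[n]\{1-n\}$ in degree $0$ (here I use $[n]\cong\mathbb{Q}\{1-n\}\oplus[n-1]\{1\}$). Each $\mathrm{tr}(\gamma)$ cancels the common $[n-1]\otimes[n-1]$ middle of its source $[n][n-1]\{4j-1\}$ and target $[n][n-1]\{4j+1\}$, via the decomposition $[n]\{a\}\cong\mathbb{Q}\{a-n+1\}\oplus[n-1]\{a+1\}$ it leaves exactly the extreme lines $[n-1]\{4j-n\}$ and $[n-1]\{4j+n\}$ in degrees $2j$ and $2j+1$, with all induced differentials zero. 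Restoring the overall shift $\{(n-1)(2k+1)\}$ yields precisely the claimed complex. The even case is identical except that the topmost differential is an $\alpha$ rather than a $\gamma$; since $\mathrm{tr}(\alpha)=0$, the final object $[n][n-1]\{4k-1\}$ is reached by a zero map and therefore survives uncancelled, accounting for the extra trailing term $[n][n-1]\{4k-1\}$.

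The main obstacle I expect is the precise bookkeeping in the two-stage trace: tracking shifts and the exact matrix forms of $\mathrm{tr}(\chi_0)$ and $\mathrm{tr}(\gamma)$ finely enough to license each application of (eq2)/(eq3), and verifying that the differentials induced between the \emph{surviving} summands genuinely vanish rather than merely drop in rank. Everything else is the routine $q$-arithmetic of the identity $[n]\cong\mathbb{Q}\{1-n\}\oplus[n-1]\{1\}$ and of the splittings $[n]\otimes[n]\cong[n]\{1-n\}\oplus[n][n-1]\{1\}$ and $[n]\{a\}\cong\mathbb{Q}\{a-n+1\}\oplus[n-1]\{a+1\}$.
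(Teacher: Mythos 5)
Your proposal is correct and follows essentially the same route as the paper: close the complexes of Theorem~\ref{main-thm} termwise, identify the closed objects and differentials via $\lambda$, $\mu$, (moy$\chi_0$), (moy$\chi_0$a), (moy$\chi_0$b) and (moy$\epsilon$), and then cancel the identity blocks with (eq2). The only cosmetic difference is that you organize the closure as a two-stage partial trace, while the paper applies the same relations with the second circle carried along and only removes it with $\lambda$ at the end; the resulting computation, including the splittings $[n]\cong\mathbb{Q}\{1-n\}\oplus[n-1]\{1\}$ and the surviving extreme summands, is identical.
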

\begin{proof}
By Theorem \ref{main-thm} we have, for any $k\geq 0$,
\begin{align*}
Kh_n\left(\,\trsplusk{2k+1}\,\right)&
=\left[\twocircles
\xrightarrow{\chi_0} \tresse\{1\}\xrightarrow{\alpha} \tresse\{3\}\right.
\\
&\xrightarrow{\gamma} \tresse\{5\}\xrightarrow{\alpha} \cdots
\xrightarrow{\alpha}\tresse\{4k-1\}\\
&\left.\xrightarrow{\gamma}\tresse\{4k+1\}\right]\{(n-1)(2k+1)\}
\end{align*}
By equation (\ref{eq:moychi0}) we have a commutative diagram
\[
\xymatrix{
*++++++++++{\twocircles} \ar[rr]^-{\chi_0}\ar[d]_-{\lambda\otimes 1}^-{\wr} &&
*++++{{\tresse}\{1\}}\ar[d]^-{\mu\{1\}}_{\wr}\\
\iunknot\,\, \{1-n\}\oplus [n-1]\iunknot\,\,\, \{1\}\,
\ar[rr]^-{\tiny{\left(\begin{matrix}0&1\end{matrix}\right)}}&&*+++{[n-1]\iunknot\,\,\, \{1\}}
}
\]
By equation (\ref{eq:moichi0a}) we have a commutative diagram
\[
\xymatrix{
*++++{{\tresse}\{-1\}}\ar[d]^-{\mu} \ar[r]^{\alpha} &*++++{{\tresse}\{1\}}\ar[d]^-{\mu}\\
*++++{[n-1]\iunknot\,\,\{-1\}}\ar[r]^{0} &*+++{[n-1]\iunknot\,\,\{1\}}
}
\]
By equation (\ref{eq:moichi0b}) we have a commutative diagram
\[
\xymatrix{
*++++{{\tresse}\{-1\}}\ar[d]^-{\mu} \ar[r]^{\gamma} &*++++{{\tresse}\{1\}}\ar[d]^-{\mu}\\
*+++{[n-1]\iunknot\,\,\{-1\}}\ar[r]^{\epsilon} &*+++{[n-1]\iunknot\,\,\{1\}}
}
\]
Therefore, by equation (eq1) we get
\begin{align*}
Kh_n\left(\,\trsplusk{2k+1}\,\right)&
=\left[\iunknot\,\, \{1-n\}\oplus [n-1]\iunknot\,\,\, \{1\}\,
\xrightarrow{\tiny{\left(\begin{matrix}0&1\end{matrix}\right)}} {[n-1]\iunknot\,\,\{1\}}\right.\\
&\hskip -24 pt \xrightarrow{0} {[n-1]\iunknot\,\,\{3\}}
\xrightarrow{\epsilon} {[n-1]\iunknot\,\,\{5\}}\xrightarrow{0} \cdots
\xrightarrow{0}{[n-1]\iunknot\,\,\{4k-1\}}\\
&\hskip -24 pt\left.\xrightarrow{\epsilon}{[n-1]\iunknot\,\,\{4k+1\}}\right]\{(n-1)(2k+1)\}
\end{align*}
By the isomorphism
\[
\lambda\colon \iunknot\xrightarrow{\sim} [n]
\]
 and by equation (\ref{eq:moyepsilon}) we obtain
\begin{align*}
Kh_n\left(\,\trsplusk{2k+1}\,\right)&
=\left[[n] \{1-n\}\oplus [n-1][n] \{1\}\,
\xrightarrow{\tiny{\left(\begin{matrix}0&1\end{matrix}\right)}} {[n-1][n]\{1\}}\right.\\
&\hskip -120 pt \xrightarrow{0} [n-1](\mathbb{Q}\{4-n\}\oplus [n-1]\{4\}) 
\xrightarrow{\tiny{\left(\begin{matrix}0&1\\0 & 0\end{matrix}\right)}} [n-1]([n-1]\{4\}\oplus\mathbb{Q}\{4+n\}) 
\xrightarrow{0} \cdots\\
&\hskip -120 pt\left.\xrightarrow{0}[n-1](\mathbb{Q}\{4k-n\}\oplus [n-1]\{4k\}) 
\xrightarrow{\tiny{\left(\begin{matrix}0&1\\0 & 0\end{matrix}\right)}} [n-1]([n-1]\{4k\}\oplus\mathbb{Q}\{4k+n\}) \right]\{(n-1)(2k+1)\}
\end{align*}
By changing the orders in the direct sums (and so by (eq1)) we can rewrite this as
\begin{align*}
Kh_n\left(\,\trsplusk{2k+1}\,\right)&
=\left[ [n-1][n] \{1\}\oplus[n] \{1-n\}\,
\xrightarrow{\tiny{\left(\begin{matrix}1&0\end{matrix}\right)}} {[n-1][n]\{1\}}\right.\\
&\hskip -120 pt \xrightarrow{0} [n-1]([n-1]\{4\}\oplus \mathbb{Q}\{4-n\}) 
\xrightarrow{\tiny{\left(\begin{matrix}1&0\\0 & 0\end{matrix}\right)}} [n-1]([n-1]\{4\}\oplus\mathbb{Q}\{4+n\}) 
\xrightarrow{0} \cdots\\
&\hskip -120 pt\left.\xrightarrow{0}[n-1]([n-1]\{4k\}\oplus \mathbb{Q}\{4k-n\} ) 
\xrightarrow{\tiny{\left(\begin{matrix}1&0\\0 & 0\end{matrix}\right)}} [n-1]([n-1]\{4k\}\oplus\mathbb{Q}\{4k+n\}) \right]\{(n-1)(2k+1)\}
\end{align*}
By repeated use of (eq2) we therefore obtain
\begin{align*}
Kh_n\left(\,\trsplusk{2k+1}\,\right)&
=\left[ [n] \{1-n\}\,
\xrightarrow{0} 0\xrightarrow{0} [n-1]\{4-n\} 
\xrightarrow{0} [n-1]\{n+4\} 
\xrightarrow{0} \cdots\right.\\
&\hskip -12 pt\left.\xrightarrow{0}[n-1]\{4k-n\}  
\xrightarrow{0} [n-1]\{n+4k\} \right]\{(n-1)(2k+1)\}
\end{align*}

The proof for the case of an even number of crossing is perfectly analogous.
\end{proof}

\begin{remark}\label{rem:poincare}
As the MOY complex associated with an $m$-crossing 2-stand braid link can be represented by a complex with only graded vector spaces (times $\emptyset$) as objects and all zero differentials, it is completely encoded into the \emph{Poincar\'e polynomial} of the graded dimensions, obtained by the rule
\[
\left[0\to 0\to \cdots\to 0\to  \hskip -15 pt\underbrace{\mathbb{Q}\{i\}}_{\text{\tiny{horizontal degree $j$}}}  \hskip -15 pt\to 0\to\cdots\to 0\right] \mapsto q^it^j
\]
This way we obtain
\begin{align*}
Kh_n\left(\,\trsplusk{2k+1}\,\right)&
=q^{(n-1)(2k+1)}\left(q^{1-n}[n]_q+\left(t^2q^{4-n}+t^3q^{4+n}+\right.\right.\\
&\qquad\qquad\left.\left. +t^4q^{8-n}+t^5q^{8+n}+\cdots+t^{2k}q^{4k-n} +t^{2k+1}q^{4k+n}\right)[n-1]_q\right)\\
&\\
&\hskip -90 pt=q^{(n-1)(2k+1)}\left(q^{1-n}[n]_q+t^2q^4(q^{-n}+tq^n)(1+t^2q^4+(t^2q^4)^2+\cdots+(t^2q^4)^{k-1})[n-1]_q \right)\\
&\\
&\hskip -90 pt=q^{(n-1)(2k+1)}\left(q^{1-n}[n]_q+(q^{-n}+tq^n)\frac{t^2q^4-t^{2k+2}q^{4k+4}}{1-t^2q^4}[n-1]_q \right)
\end{align*}
and
\begin{align*}
Kh_n\left(\,\trsplusk{2k}\,\right)&=q^{(n-1)(2k)}\left(q^{1-n}[n]_q+\left(t^2q^{4-n}+t^3q^{4+n}+\right.\right.\\
&\hskip -50 pt\left. \left.+t^4q^{8-n}+t^5q^{8+n}+\cdots+t^{2k-2}q^{4(k-1)-n} +t^{2k-1}q^{4(k-1)+n}\right)[n-1]_q+t^{2k}q^{4k-1}[n]_q[n-1]_q\right)\\
&\\
&\hskip -100 pt=q^{(n-1)(2k+1)}\left(q^{1-n}[n]_q+t^2q^4(q^{-n}+tq^n)(1+t^2q^4+\cdots+(t^2q^4)^{k-2})[n-1]_q +t^{2k}q^{4k-1}[n]_q[n-1]_q\right)\\
&\\
&\hskip -100 pt=q^{(n-1)(2k)}\left(q^{1-n}[n]_q+(q^{-n}+tq^n)\frac{t^2q^4-t^{2k}q^{4k}}{1-t^2q^4}[n-1]_q+t^{2k}q^{4k-1}[n]_q[n-1]_q \right)
\end{align*}
where
\[
[m]_q=\dim_q [m]=q^{1-m}+q^{3-m}+\cdots+q^{m-3}+q^{m-1}=\frac{q^m-q^{-m}}{q-q^{-1}}.
\]
\end{remark}
\begin{remark}
Evaluating at $t=-1$ the above Poincar\'e polynomials one recovers the known formulas for the level $n$ Jones polynomials of the $m$-crossings 2-strand braid link, see \cite{fw}:
\[
Jones_n\left(\,\trsplusk{2k+1}\,\right)=q^{(n-1)(2k+1)}\left(q^{1-n}[n]_q+(q^{-n}-q^n)\frac{q^4-q^{4k+4}}{1-q^4}[n-1]_q \right)
\]
\[
Jones_n\left(\,\trsplusk{2k}\,\right)
=q^{(n-1)(2k)}\left(q^{1-n}[n]_q+(q^{-n}-q^n)\frac{q^4-q^{4k}}{1-q^4}[n-1]_q+q^{4k-1}[n]_q[n-1]_q \right)
\]
\end{remark}

\begin{example}[The Hopf link] The 2-crossing 2-braid link is known as the \emph{Hopf link}, as collections of pairs of $S^1$ linked this way arise in the Hopf fibration $S^3\to S^2$. Specializing the formula from Remark \ref{rem:poincare} to the case $2k=2$, we find
\begin{align*}
Kh_n\left(\,\hopf\,\right)&=q^{2(n-1)}\left(q^{1-n}[n]_q+t^{2}q^{3}[n]_q[n-1]_q \right)\\
&=q^{2(n-1)}\left(q^{1-n}[n]_q+t^{2}q^{2}[n]_q([n]_q-q^{1-n}) \right)\\
&=q^{n-1}\left(\frac{q^n-q^{-n}}{q-q^{-1}}\right)+q^{2n}\left(\frac{q^n-q^{-n}}{q-q^{-1}}\right)^2t^2-q^{n-1}\left(\frac{q^n-q^{-n}}{q-q^{-1}}\right)t^2
\end{align*}
This way one sees how our general formula from Remark \ref{rem:poincare} specializes to the formula  in \cite{Carqueville-Murfet}.
\end{example}

\begin{example}[The trefoil knot] The 3-crossing 2-braid link is the \emph{trefoil knot}. Specializing the formula from Remark \ref{rem:poincare} to the case $2k+1=3$, we find
\begin{align*}
Kh_n\left(\,\trefoil\,\right)&=q^{3(n-1)}\left(q^{1-n}[n]_q+q^{-n}(q^{-n}+tq^n)t^2q^4[n-1]_q \right)\\
&=q^{2(n-1)}\left([n]_q+[n-1]_qq^{-1}(1+tq^{2n})t^2q^4 \right)\\
&=q^{2n-2}\left(\frac{q^n-q^{-n}}{q-q^{-1}}+\frac{q^{n-1}-q^{-n+1}}{q-q^{-1}}q^{-1}(1+tq^{2n})t^2q^4 \right)
\end{align*}
This way one sees how our general formula from Remark \ref{rem:poincare} specializes to the formula  in \cite{Carqueville-Murfet} (up to the change of variable $t\leftrightarrow t^{-1}$ witnessing the mirror refelction relating the trefoil knot considered here and the thefoil knot considered in \cite{Carqueville-Murfet}).
\end{example}

\end{document}